\crefname{section}{\S}{\S\S}
\Crefname{section}{\S}{\S\S}
\tikzset{node distance=2cm, auto}
\newtheorem{theorem}{Theorem}[section]
\newtheorem{lemma}[theorem]{Lemma}
\newtheorem{proposition}[theorem]{Proposition}
\newtheorem{corollary}[theorem]{Corollary}
\newtheorem*{theorem*}{Theorem}
\theoremstyle{remark}
\newtheorem{remark}[theorem]{Remark}
\newtheorem{definition}[theorem]{Definition}
\newtheorem{example}[theorem]{Example}
\numberwithin{equation}{section}
\newcommand{\R}{\mathbb{R}}
\newcommand{\cT}{\mathcal{T}}
\newcommand{\rank}{\operatorname{rank}}
\newcommand{\tr}{\operatorname{tr}}   
\newcommand{\cD}{\mathcal{D}}
\newcommand{\cM}{\mathcal{M}}
\newcommand{\cN}{\mathcal{N}}
\newcommand{\cW}{\mathcal{W}}
\newcommand{\reals}{\mathbb{R}}
\newcommand{\maxrank}{\operatorname{maxrank}}
\newcommand{\minrank}{\operatorname{minrank}}
\newcommand{\bd}{\partial}
\newcommand{\tw}{\widetilde{w}}
\newcommand{\Perm}{\operatorname{Perm}}
\def\pgf@readxyfile{%
  \read1 to \pgf@temp%
  \let\par=\pgf@savedpar%
  \edef\pgf@temp{\pgf@temp}%
  \ifx\pgf@temp\pgfutil@empty%
  \else\ifx\pgf@temp\pgf@partext%
    \pgfplotstreamstart%
    \pgfplotstreamend%
  \else%
    \expandafter\pgf@parsexyline\pgf@temp\pgf@stop%
  \fi\fi%
  \ifeof1\else\expandafter\pgf@readxyfile\fi%
}
\begin{document}

\title[Conformal Invariants on Weighted Graphs]
{Conformally Covariant Operators and Conformal Invariants on Weighted Graphs}

\author[D. Jakobson]{Dmitry Jakobson}
\email{jakobson@math.mcgill.ca}

\author{Thomas Ng}
\email{thomas.ng2@mail.mcgill.ca}

\author{Matthew Stevenson}
\email{matthew.stevenson2@mail.mcgill.ca}

\author{Mashbat Suzuki}
\email{mashbat.suzuki@mail.mcgill.ca}

\address{Department of Mathematics and
Statistics, McGill University, 805 Sherbrooke Str. West, Montr\'eal
QC H3A 0B9, Ca\-na\-da.}

\keywords{Weighted graph, conformal structure, moduli space, conformally covariant 
operator, conformal invariant, adjacency matrix, incidence matrix, edge Laplacian, kernel, 
signature, nodal set}

\subjclass[2010]{Primary: 05C22.  Secondary: 05C50, 53A30, 53A55, 58D27, 58J50}

\thanks{D. J. was supported by NSERC and FQRNT. T. Ng was supported by ISM and CRM. 
M. St. was supported by NSERC. M. Suz. was supported by McGill Faculty of Science.}

\date{\today}

\begin{abstract}
Let $G$ be a finite connected simple graph. We define the moduli space of conformal structures on $G$. 
We propose a definition of conformally covariant operators on graphs, motivated by \cite{GJMS}.  
We provide examples of conformally covariant operators, which include the edge Laplacian 
and the adjacency matrix on graphs.  In the case where such an operator has a nontrivial kernel, 
we construct conformal invariants, providing discrete counterparts of several results in \cite{dima1,dima2} established for Riemannian 
manifolds.  In particular, we show that the nodal sets and nodal domains of null eigenvectors are conformal invariants. 
\end{abstract}

\maketitle

\section{Introduction: conformally covariant operators} 

Conformal transformations in Riemannian geometry preserve angles between tangent vectors at every point 
$x$ on a Riemannian manifold $M$.  A Riemannian metric $g_1$ is conformally equivalent to a metric 
$g_0$ if 
\begin{equation}\label{conf:change}
(g_1)_{ij}(x)=e^{\omega(x)}(g_0)_{ij}(x), 
\end{equation}
where $g_{ij}= g(\bd/\bd x_i,\bd/\bd x_j)$  defines the 
metric $g$ in local coordinates, and where $e^{\omega(x)}$ is a positive function on $M$ called a 
{\em conformal factor}.  A {\em conformal class} $[g_0]$ of a metric $g_0$ is the set of all metrics of the 
form $\{e^{\omega(x)}g_0(x):\omega(x)\in C^\infty(M)\}$.  The Uniformization theorem for compact Riemann 
surfaces says that on such a surface, in every conformal class there exists a metric of constant 
Gauss curvature; the corresponding statement in dimension $n\geq 3$ (solution of the Yamabe 
problem) stipulates that in every conformal class there exists a metric of constant {\em scalar} curvature. 

{\em Conformally covariant} differential operators include the Laplacian in dimension two, as well as the 
conformal Laplacian, Paneitz operator and other higher order operators in dimension $n\geq 3$.  
We refer the readers to the papers \cite{FG,graham,GJMS,Paneitz,wun} for detailed description of 
those operators. 

Their defining property is the transformation law 
under a conformal change of metric: there exist $a,b\in\reals$ such that if  $g_1$ and 
$g_0$ are related as in \eqref{conf:change}, then 

\begin{equation}\label{conf:covariant:manifold} 
P_{g_1}=e^{a\omega}P_{g_0} e^{b\omega}. 
\end{equation}

It follows easily that $\ker P_{g_1}=e^{-b\omega}\ker P_{g_0}.$ Based on this observation, the authors 
of the papers \cite{dima1,dima2} constructed several conformal invariants related to the nodal sets of 
eigenfunctions in $\ker P_{g}$ (that change sign).  In the current paper, the authors initiate the 
development of the theory of conformally covariant operators on graphs, giving several 
examples of such operators and providing discrete counterparts to several results in  \cite{dima1,dima2}.

%%%%%%%%%%%%

\subsection{Differential operators on graphs} 

Let $G = (V,E)$ be a finite simple graph, i.e. it has a finite vertex set and no loops or multiple edges. A \emph{weighted graph} is a pair $(G,w)$ where 
$w \colon E \to \R_+$ is a weight function. 

A \emph{differential operator} on $G$ is a linear homomorphisms on either $\textrm{Hom}(V,\R)$, $\textrm{Hom}(E,\R)$, $\textrm{Hom}(V,E)$, or $\textrm{Hom}(E,V)$. These vector spaces are 
equipped with the $L^2$-norms
\begin{equation}
\langle f, g \rangle_V = \sum_{v \in V} f(v)g(v),
\hspace{3mm} \textrm{ and } \hspace{3mm}
\langle \tilde{f}, \tilde{g} \rangle_E = \sum_{e \in E} \tilde{f}(e) \tilde{g}(e).
\end{equation}
This extends to locally-finite graphs with countable vertex sets, where the function spaces are replaced by those functions with finite $L^2$-norms.

\begin{example}
The \emph{adjacency matrix} $A_w$ of the weighted graph $(G,w)$ is the $|V| \times |V|$ matrix given by
\begin{equation}\label{eqn:adjacency}
[A_w]_{ij} = \begin{cases}
w(v_i,v_j) & \colon (v_i,v_j) \in E,\\
0 & \colon \textrm{otherwise.}
\end{cases}
\end{equation}
The \emph{degree matrix} $D_w$ is the the diagonal matrix with $[D_w]_{ii} = \sum_{j=1}^n [A_w]_{ij}$, 
and the \emph{vertex Laplacian} is $\Delta_w = D_w - A_w$. The vertex Laplacian is an example of an elliptic 
Schr\"{o}dinger operator in the sense of \cite{ycdv}.
\end{example}

\section{Conformal changes of metric} 

Let $\mathcal{W}(G)$ be the space of all weight functions on the graph $G$. Inspired by the notion of 
conformal equivalence of Riemannian metrics on a Riemannian manifold, we define below the notion of 
conformal equivalence of weights as in \cite{bobenko, champion, glickstein, Luo}.  

\begin{definition}\label{def:weight}
Two weight functions $w, \tilde{w} \in \mathcal{W}(G)$ are \emph{conformally equivalent} if there exists 
a function $u \in \textrm{Hom}(V,\R)$ such that
\begin{equation}\label{conformal:transform}
\tilde{w}(v_i,v_j) = w(v_i,v_j) e^{u(v_i) + u(v_j)}. 
\end{equation}
\end{definition}
\noindent
We say that $u$ is the \emph{conformal factor} relating $w$ and $\tilde{w}$.
This equivalence relation allows us to partition the space of all weights $\mathcal{W}(G)$ on the graph $G$ into conformal equivalence classes. 
Given $w \in \mathcal{W}(G)$, we will denote its conformal class by $[w]$.

If $\sim_c$ denotes conformal equivalence, then let $\mathcal{W}(G)/{\sim_c}$ be the space of conformal classes of weights on $G$. We will refer to $\mathcal{W}(G)/{\sim_c}$ as the \emph{(conformal) moduli space of the graph $G$}. In \cref{sec:moduli}, we study the structure of the moduli space and characterize it explicitly for connected graphs.

%%%%%%%%%%

\section{The space of conformal classes}\label{sec:moduli}

If $G=(V,E)$ is a finite simple graph, recall that $\mathcal{W}(G)$ is the space of weights on $G$. If $\sim_c$ denotes conformal equivalence, then let $\cM:=\mathcal{W}(G)/{\sim_c}$ be the space of conformal classes of weights on $G$. We will refer to $\cM$ as the \emph{(conformal) moduli space of $G$}.

We remark that if $|V| > |E|$ (i.e. $G$ is a tree), then $\cM$ is just a point. If $|V|=|E|$, there are different 
possibilities: an odd-length cycle $C_{2n+1}$ has only one conformal class, but an even-length cycle $C_{2n}$ 
has infinitely many.  If  $|E| > |V|$, then $\cM$ is generally nontrivial.

Enumerate the vertex set as $V = \{ v_1, \ldots, v_n\}$ and the edge set $E = \{ e_1, \ldots, e_m \}$. The (unsigned) edge-vertex incidence matrix $B = (B_{ij})$ of $G$
\begin{equation}\label{eqn:unsigned}
B_{ij} = \begin{cases}
1 & \colon \textrm{the edge $e_j$ is adjacent to the vertex $v_i$} \\
0 & \colon \textrm{otherwise}
\end{cases}
\end{equation}
Notice that $B$ is determined only by the topology (combinatorics) of the graph $G$; it does not depend
 on the weight. We will be mostly interested in calculating the rank of $B$. The following is a result of Grossman, Kulkarni, and Schochetman (see \cite{grossman}) to that effect.

\begin{theorem}\label{thm:bipartite-component}
\emph{(\cite[Thm 5.2]{grossman})}
For any graph $G=(V,E)$, let $\omega_0$ be the number of bipartite components.\footnote{A bipartite component is a connected component that is also bipartite. Equivalently, $\omega_0$ is the number of connected components of $G$ that do not contain an odd cycle.} Then, $\textrm{rank}(B) = |V|-\omega_0$.
\end{theorem}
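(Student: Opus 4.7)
The plan is to compute the rank of $B$ by analyzing its left null space, i.e., $\ker B^T \subseteq \R^V$. Since $B$ is $|V|\times |E|$ and $\mathrm{rank}(B)=\mathrm{rank}(B^T)$, rank-nullity applied to $B^T$ gives $\mathrm{rank}(B)=|V|-\dim\ker B^T$, so it suffices to show $\dim\ker B^T=\omega_0$.

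First I would identify the defining equations of the left null space. A vector $f\in\R^V$ lies in $\ker B^T$ if and only if, for every edge $e=(v_i,v_j)\in E$, the corresponding entry of $B^T f$ vanishes, i.e.
\begin{equation}
f(v_i)+f(v_j)=0 \quad \text{for all } (v_i,v_j)\in E.
\end{equation}
Because these constraints only couple vertices lying in the same connected component, the kernel decomposes as a direct sum over the connected components $G=G_1\sqcup\cdots\sqcup G_k$, and it is enough to compute the dimension of the analogous space for each $G_\ell$ separately.

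Next I would analyze a single connected component $C$. Fix a vertex $v_0\in C$ and set $f(v_0)=t$ for some $t\in\R$. For any other $v\in C$, choose a walk $v_0,v_1,\ldots,v_r=v$ in $C$; applying the constraint along the walk forces $f(v)=(-1)^r t$. The value of $f$ on $C$ is therefore determined by $t$ together with the parity of walks from $v_0$ to each vertex. The question is whether this assignment is \emph{consistent}: it is consistent precisely when every closed walk at $v_0$ has even length, equivalently (using that $C$ is connected) when $C$ contains no odd cycle, i.e.\ when $C$ is bipartite. If $C$ is bipartite with bipartition $C=A\sqcup B$, the map $t\mapsto t\cdot(\mathbf{1}_A-\mathbf{1}_B)$ gives a one-dimensional solution space. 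If $C$ contains an odd cycle, walking around it gives $t=-t$, forcing $t=0$ and hence $f\equiv 0$ on $C$, so the solution space is zero-dimensional.

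The main technical point will be verifying the consistency claim cleanly, namely that in a connected graph the parity of walks between two fixed vertices depends only on the endpoints iff the graph is bipartite; this is a standard characterization, but it is the one spot where a careful argument is required. Once that is in place, summing dimensions over components gives $\dim\ker B^T=\omega_0$, and rank-nullity yields $\mathrm{rank}(B)=|V|-\omega_0$, completing the proof.
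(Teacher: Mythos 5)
Your proof is correct. Note, however, that the paper does not prove this statement at all: it is quoted verbatim from Grossman--Kulkarni--Schochetman \cite[Thm 5.2]{grossman}, so there is no internal proof to compare against. Your argument is the standard self-contained one: identify $\ker B^T=\{f\in\R^V : f(v_i)+f(v_j)=0 \text{ for all } (v_i,v_j)\in E\}$, decompose over connected components, and show each component contributes dimension $1$ or $0$ according to whether it is bipartite, then conclude by rank-nullity. This is exactly the right mechanism, and it has the added benefit of making visible \emph{why} the kernel counts bipartite components --- a fact the paper later reuses implicitly when computing $\dim\ker(\Delta(\emptyset,w))=|E|-|V|+\omega_0$ in \cref{prop:independent2}, since $M(\emptyset,w)$ has the same rank as $B$. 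One small point to tighten when you write it up: your sentence ``if $C$ contains an odd cycle, walking around it gives $t=-t$'' tacitly assumes the odd cycle passes through the base vertex $v_0$. The fix is the one your own setup already supplies: walk from $v_0$ to the cycle, around it, and back, producing a closed walk of odd length at $v_0$, whence $t=(-1)^{\mathrm{odd}}t=-t$ and $t=0$; alternatively, observe that $|f|$ is constant on a connected component, so vanishing at any one vertex (e.g.\ a vertex $u$ on the odd cycle, where $f(u)=-f(u)$ directly) forces $f\equiv 0$ on the component. Your flagged ``main technical point'' --- that in a connected graph all walks between two fixed vertices have the same parity iff the graph is bipartite --- is indeed the standard odd-closed-walk-contains-an-odd-cycle lemma, and citing or proving it in two lines closes the only gap.
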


Fix a reference weight $w_0 \in \mathcal{W}(G)$ and consider $w \in [w_0]$ with conformal factor $u \in \textrm{Hom}(V,\R)$.
Then we have a system of $|E|$ linear equations given by
\begin{equation}\label{eqn:system}
u(v_i) + u(v_j) = \ln w(v_i,v_j) - \ln w_0(v_i,v_j) \textrm{ for } (v_i,v_j) \in E.
\end{equation}
Consider the transpose of $B$ as an $\R$-linear operator\footnote{By an abuse of notation, we 
consider $\mathcal{W}(G)$ to be an $\R$-vector space of dimension $|E|$ by identifying a weight 
$w = (w_{e})_{e \in E}$ with the vector $(\ln w_{e})_{e \in E} \in \R^{|E|}$. In this way, $[1]$ is a 
linear subspace and the other conformal classes are the equivalences classes in the quotient $\mathcal{W}(G)/[1]$.}
$B^t \colon \textrm{Hom}(V,\R) \to [w_0]$ that sends $u \in \textrm{Hom}(V,\R)$ to the weight $w \in [w_0]$ defined by \cref{eqn:system}. This map is by definition surjective, so $\dim([w_0]) = \textrm{rank}(B^T) = \textrm{rank}(B)$.

Let $[1]$ denote the conformal class of the combinatorial weight, then we can identify 
$\cM=\mathcal{W}(G)/{\sim_c}$ with the $\R$-vector space $\mathcal{W}(G) /[1]$. The 
above considerations then imply that
\[
\dim(\cM) = \dim(\mathcal{W}(G)) -\dim( [1]) = |E|-\textrm{rank}(B).
\]
This discussion therefore yields the following conclusion.

\begin{theorem}\label{thm:homeo}
If $\omega_0$ is the number of bipartite components of $G$, then 
\begin{equation}
\cM=W(G)/{\sim_c} \simeq (\R_+)^{|E|-|V|+\omega_0}.
\end{equation}
\end{theorem}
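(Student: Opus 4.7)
The proof is essentially assembled in the paragraph preceding the theorem; my plan is to tighten the identifications and invoke \cref{thm:bipartite-component}. First I would pin down the vector-space structure of $\cM$. Following the footnote, the coordinate-wise logarithm identifies $\mathcal{W}(G)$ with $\R^{|E|}$; under this identification, \cref{conformal:transform} becomes the additive relation $\ln \tw_e - \ln w_e = u(v_i) + u(v_j)$, so $[1]$ is a linear subspace of $\R^{|E|}$ and $\cM = \mathcal{W}(G)/[1]$ inherits the structure of a real vector space.

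Second, I would extract the dimension count already sketched in the text. The transpose $B^T \colon \textrm{Hom}(V,\R) \to [1]$, sending $u$ to $(u(v_i) + u(v_j))_{(v_i,v_j) \in E}$, is surjective onto $[1]$ by the very definition of conformal equivalence, so $\dim [1] = \rank B^T = \rank B$, and hence
\[
\dim \cM = |E| - \dim [1] = |E| - \rank B.
\]
Substituting $\rank B = |V| - \omega_0$ from \cref{thm:bipartite-component} gives $\dim \cM = |E| - |V| + \omega_0$.

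Finally, to pass from the additive model $\R^{|E|-|V|+\omega_0}$ to the multiplicative presentation $(\R_+)^{|E|-|V|+\omega_0}$ appearing in the statement, I would invert the logarithm via the coordinate-wise exponential, which is a homeomorphism and an isomorphism of abelian groups between the additive and multiplicative structures. There is no substantive obstacle: the theorem is really a bookkeeping exercise combining the surjectivity of $B^T$ onto $[1]$ with the Grossman--Kulkarni--Schochetman rank formula. The only care required is to track carefully the additive (logarithmic) versus multiplicative presentations of the weight space, so that the final identification $\cM \simeq (\R_+)^{|E|-|V|+\omega_0}$ carries whichever structure (real vector space, topological group, or merely topological space) one wants to preserve.
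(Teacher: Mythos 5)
Your proposal is correct and follows essentially the same route as the paper: the logarithmic identification of $\mathcal{W}(G)$ with $\R^{|E|}$, the surjectivity of $B^T$ onto the conformal class giving $\dim[1]=\rank B$, and the substitution of $\rank B = |V|-\omega_0$ from \cref{thm:bipartite-component} are exactly the steps in the discussion preceding the theorem. Your added care in passing between the additive and multiplicative presentations via the coordinate-wise exponential is a reasonable tightening of a point the paper leaves implicit, but it is not a different argument.
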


\begin{remark}
Let $G_1,\ldots,G_k$ denote the connected components of $G$, then 
\[
\mathcal{W}(G)/{\sim_c} = \mathcal{W}(G_1)/{\sim_c} \times \ldots \times \mathcal{W}(G_k)/{\sim_c}.
\]
As a consequence, we can reduce to the case where $G$ is connected; in particular,  \cref{thm:bipartite-component} tells us that when $G$ is connected,
\begin{enumerate}
\item If $G$ is bipartite, $\dim(\cM) = |E|-|V| + 1$.
\item If $G$ is not bipartite, $\dim(\cM) = |E|-|V|$.
\end{enumerate}
\end{remark}

The following proposition specifies a manner of choosing a canonical representative of each 
conformal class.
\begin{proposition}\label{prop:representative}
In each conformal class $[w] \in \cM$, there exists a unique representative $\overline{w} \in [w]$ such 
that for any $v_i \in V$, 
\begin{equation}\label{normalization}
\prod_{e \sim v_i} \overline{w}(e) = 1,
\end{equation}
where $e \sim v_i$ denotes that the edge $e$ has the vertex $v_i$ as an endpoint.
\end{proposition}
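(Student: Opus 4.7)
The plan is to pass to logarithmic coordinates on the edge weights, which reduces the claim to solving a linear system involving the unsigned incidence matrix $B$ from \eqref{eqn:unsigned}. Fix any representative $w \in [w]$ and set $x_e := \ln w(e)$, so that any other representative $\overline{w} \in [w]$ corresponds to a log-weight vector $\overline{x} = x + B^T u$ in $\R^{|E|}$, where $B^T \colon \textrm{Hom}(V,\R) \to \R^{|E|}$ sends $u$ to the vector whose entry at the edge $(v_i,v_j)$ is $u(v_i)+u(v_j)$; this is \eqref{conformal:transform} in log form. The normalization \eqref{normalization} reads $\sum_{e \sim v_i} \overline{x}_e = 0$ for every vertex, i.e.\ $B \overline{x} = 0$. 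So I must solve
\[
BB^T u = -Bx
\]
and verify that the resulting $\overline{w}$ does not depend on the choice of solution $u$.

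For existence, the standard identity $\ker(BB^T) = \ker(B^T)$ (from $\langle BB^T u, u\rangle = \|B^T u\|^2$) gives $\textrm{range}(BB^T) = \textrm{range}(B)$. Since $-Bx \in \textrm{range}(B)$ trivially, the system always has a solution $u$, which produces a representative $\overline{w} \in [w]$ satisfying \eqref{normalization}.

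For uniqueness of $\overline{w}$, suppose $\overline{w}_1, \overline{w}_2 \in [w]$ both satisfy \eqref{normalization} and are related by a conformal factor $u$. Subtracting the two normalization conditions at every vertex yields $BB^T u = 0$, hence $u \in \ker(B^T)$ by the identity above, which is exactly the condition $u(v_i)+u(v_j) = 0$ on every edge; by \eqref{conformal:transform} this forces $\overline{w}_2 = \overline{w}_1$. The whole argument is linear algebra, and there is no real obstacle; the only conceptual point worth highlighting is that the indeterminacy $\ker(B^T)$ in the choice of $u$ (which is nontrivial precisely when $G$ has bipartite components, by \cref{thm:bipartite-component}) is exactly the ambiguity that drops out when passing from $u$ to $\overline{w}$, so the normalized representative is canonical regardless.
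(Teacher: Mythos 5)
Your proof is correct and takes essentially the same route as the paper's: both pass to logarithmic coordinates, translate \eqref{normalization} into the linear condition $B(\ln \overline{w}) = 0$, and exploit the duality between $\ker(B)$ and $\textrm{Im}(B^T)$, with the conformal class $[w]$ becoming the affine subspace $\ln w + \textrm{Im}(B^T)$. The only difference is packaging: the paper obtains existence and uniqueness in one stroke from the orthogonal decomposition $\ln \mathcal{W}(G) = \ker(B) \oplus \textrm{Im}(B^T)$, whereas you solve the normal equations $BB^T u = -Bx$ and then check by hand that the ambiguity $\ker(B^T)$ in $u$ (nontrivial exactly when bipartite components are present) does not affect $\overline{w}$ --- both arguments rest on the same inner-product identity $\langle BB^T u, u \rangle = \lVert B^T u \rVert^2$.
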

\begin{remark}
The equation \eqref{normalization} is a very useful normalization condition, simplifying 
computations in examples of section  \ref{sec:partition}.  In the continuous setting 
(\cite{dima1,dima2}), a convenient normalization condition was choosing a metric with 
constant scalar curvature in each conformal class; such metrics exist by the Uniformization 
theorem in dimension $2$, and by solution of the Yamabe problem in higher dimensions. 
\end{remark}

\begin{proof}
The statement is equivalent to finding $\overline{w} \in [w]$ such that $\sum_{e \sim v_i} \ln \overline{w}(e) = 0$ for all $v_i \in V$.
Equivalently, $\ln \overline{w} \in \textrm{Ker}(B)$ i.e. $[w] \cap \textrm{Ker}(B) = \{ \overline{w} \}$. 
Notice that we have the orthogonal decomposition 
\begin{equation}
\ln\mathcal{W}(G) = \textrm{Ker}(B) \oplus \textrm{Im}(B^T),
\end{equation}
since $(\textrm{Ker}(B))^{\perp} = \textrm{Im}(B^T)$. Recall that we can identify $\ln[1]$ with 
$\textrm{Im}(B^T)$ inside $\ln\mathcal{W}(G)$, so when we pass to the quotient, we find that
\[
\ln\mathcal{W}(G)/\ln[1] = \textrm{Ker}(B).
\]
Therefore, the conformal classes are in bijection with the elements of $\textrm{Ker}(B)$; in particular, each $[w]$ intersects $\textrm{Ker}(B)$ at exactly one point.
\end{proof}

\begin{example}
Pick an edge in the even cycle $C_{2n}$ and assign to it the weight $a \in (0,\infty)$, then assign each adjacent edge the weight $\frac{1}{a}$. The following two edges will be assigned the weight $a$, and if we continue this process, we have define a weight $w_a$ on $C_{2n}$. By construction, $\prod_{e \sim v_i} w_a(e) = 1$ for each $v_i \in C_{2n}$. Conversely, given an arbitrary weight $w \in \mathcal{W}(C_{2n})$, we may compute its canonical representative as follows: put a cyclic orientation $\{ e_1, \ldots, e_m\}$ on $E$, then
\begin{equation}
a = \left( \prod_{i \colon \textrm{odd}} w(e_i) \right) \bigg/ \left( \prod_{i \colon \textrm{even}} w(e_i) \right).
\end{equation}
Thus, we have explicitly the isomorphism $\cM(C_{2n}) \stackrel{\sim}{\longrightarrow} (0,\infty)$. 
\end{example}

\begin{remark} 
It is well-known that on compact Riemann surfaces, in each conformal class there exists a unique metric of 
constant Gauss curvature.  The space of such metrics is called the {\em moduli space} of surfaces.  Riemannian   
geometry of moduli spaces has been extensively studied; in particular, the {\em Weil-Petersson} metric 
on the moduli space.  In \cite{PS}, the authors propose and study analogous notions for weighted graphs.
It seems quite interesting to relate the results in our paper to those in \cite{PS}.  The authors intend 
to consider this in a future paper. 
\end{remark} 

%%%%%%%%%%%%%%

\section{Conformally covariant operators}\label{sec:CCO:definition}

Motivated by the transformation law \eqref{conf:covariant:manifold} for conformally covariant operators on manifolds, we define discrete conformally covariant operators below, 
and provide several examples of such operators.  The ``continuous'' transformation law 
\eqref{conf:covariant:manifold} involves pre- and post-multiplication by positive functions 
(powers of the conformal weight); on a graph, the multiplication operator by a positive function 
$f:V\to \reals$ corresponds to multiplication by the diagonal matrix 
${\mathrm diag}(f(v_1),\ldots ,f(v_n))$. 

\begin{definition}\label{def:operator}
Fix a graph $G$  and let $\{ S_w \}$ be a collection of differential operators, indexed by $w \in \mathcal{W}(G)$.  
We say that $S_w$ is {\em conformally covariant} if for any weight $\tilde{w} \in [w]$, 
there exist two invertible diagonal matrices $D_{\alpha}, D_{\beta}$ with positive entries (those 
entries should only depend on the conformal factors in \eqref{conformal:transform}), 
such that
\begin{equation}\label{transform:graphs}
S_{\tilde{w}} = D_{\alpha} S_w D_{\beta}. 
\end{equation}
In this case, we say that $S_{\tilde{w}}$ is a \emph{conformal transformation} of $S_w$.\footnote{The manner in which 
these differential operators transform under a conformal change of weight is analogous to how the GMJS operators 
transform under a conformal change of Riemannian metric in \cite{dima1, dima2}.} \\
Note that we do not require that $D_{\alpha}$ and $D_{\beta}$ have the same dimension.  
\end{definition}

\begin{example}
Write $e = (e^+,e^-)$ to denote the head and tail of an edge, and fix $F \subset E$. In \cite{PS}, the authors employ a $|E| \times |E|$ matrix, which we denote by $A_0(F,w)$, to study a Weil-Petersson type metric on $\cM$. This matrix is given by

\begin{equation}\label{eqn:ps_matrix}
[A_0(F,w)]_{ij} := \begin{cases}
w(e_i^+,e_j^+) & e_i,e_j \in F \textrm{ and $e_j^- = e_i^+$}\\
0 & \textrm{otherwise.}
\end{cases}
\end{equation}
Let $\tilde{w} \in [w]$ with conformal factor $u \in \textrm{Hom}(V,\R)$ and let $D_u$ be the diagonal matrix given by $[D_u]_{ii} = e^{u(e_i^+)+u(e_i^-)}$, then $A_0(F,\tilde{w}) = A_0(F,w) D_u$. Therefore, $A_0(F,w)$ is a conformally covariant operator on $\textrm{Hom}(E,\R)$.
\end{example}

In \cref{sec:adjacency} and \cref{sec:incidence}, we describe other classes of operators satisfying the transformation law \eqref{transform:graphs}, which include the adjacency matrix and 
the edge Laplacian.  

\subsection{Signature is a conformal invariant}\label{sec:signature}

Let $(G,w)$ be a fixed  finite simple weighted graph throughout this section.
\begin{theorem}\label{thm:kernel}
Let $S_w$ be a conformally covariant operator on $\textrm{Hom}(E,\R)$, then up to isomorphism $\textrm{ker}(S_w)$ is conformally invariant.  A similar results holds for conformally covariant operators on $\textrm{Hom}(V,\R)$.
\end{theorem}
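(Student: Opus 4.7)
The plan is to construct an explicit linear isomorphism $\Phi \colon \ker(S_w) \to \ker(S_{\tilde{w}})$ whenever $\tilde{w} \in [w]$. By \cref{def:operator}, conformal covariance yields invertible diagonal matrices $D_{\alpha}, D_{\beta}$ with strictly positive entries such that $S_{\tilde{w}} = D_{\alpha} S_w D_{\beta}$, where $D_{\beta}$ acts on the domain of $S_w$ and $D_{\alpha}$ on the codomain. The key observation is that pre- and post-composition by an invertible linear map preserves the kernel up to an explicit identification.

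Since $D_{\alpha}$ is invertible, I would first write
\[
\ker(S_{\tilde{w}}) = \ker(D_{\alpha} S_w D_{\beta}) = \ker(S_w D_{\beta}).
\]
A vector $u$ in the domain belongs to $\ker(S_w D_{\beta})$ if and only if $D_{\beta} u \in \ker(S_w)$, i.e.\ if and only if $u \in D_{\beta}^{-1}(\ker(S_w))$. Therefore the map
\[
\Phi \colon \ker(S_w) \longrightarrow \ker(S_{\tilde{w}}), \qquad \Phi(v) := D_{\beta}^{-1} v,
\]
is well-defined and is a linear bijection, because $D_{\beta}^{-1}$ is an invertible linear endomorphism of the domain of $S_w$. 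The argument for operators acting on $\textrm{Hom}(V,\R)$ is identical after swapping the function space.

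I do not anticipate any serious obstacle here: the result is essentially immediate from \cref{def:operator}, provided one is careful that conformal covariance has already built in the invertibility of the intertwining diagonal matrices. The proof is conceptual rather than computational, and its usefulness lies in the very explicit form $\Phi = D_{\beta}^{-1}$. In particular, because the entries of $D_{\beta}^{-1}$ are strictly positive, $\Phi$ preserves the sign of each coordinate of any element of $\ker(S_w)$; this observation will be the starting point for the invariance of signatures, nodal sets, and nodal domains announced in the abstract.
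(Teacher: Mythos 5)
Your proposal is correct and follows essentially the same route as the paper's own proof: both use the invertibility of $D_{\alpha}$ to reduce $S_{\tilde{w}} f = 0$ to $S_w(D_{\beta} f) = 0$ and conclude that $\ker(S_{\tilde{w}}) = D_{\beta}^{-1}\cdot\ker(S_w)$. Your closing remark that the positivity of the entries of $D_{\beta}^{-1}$ underlies the later invariance of nodal sets is also exactly the observation the paper exploits in \cref{nodal:set:domain:invariant}.
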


\begin{proof}
If $\tilde{w} \in [w]$, then there exists an invertible diagonal matrices $D_{\alpha},D_{\beta}$ such that 
$S_{\tilde{w}} = D_{\alpha} S_w D_{\beta}$. 
Let $f \in \textrm{Hom}(E,\R)$, then $S_{\tilde{w}} f = 0$ iff $S_w( D_{\beta} f) = 0$. 
It follows that $\textrm{ker}(S_{\tilde{w}}) = D_{\beta}^{-1} \cdot\textrm{ker}(S_w)$. 
\end{proof}

The first assertion of \cref{thm:kernel} implies that the dimension of $\textrm{ker}(S_w)$ is a 
numerical invariant of the conformal class $[w]$ (as the graph is finite, the rank of $S_w$ must also 
then be a conformal invariant). As the multiplicity of zero eigenvalues of a matrix is the dimension 
of its nullspace, notice that
\begin{corollary}\label{cor:zero_eigenvalues}
The multiplicity of the zero eigenvalue of $S_w$ is a conformal invariant.
\end{corollary}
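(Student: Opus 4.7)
The plan is to deduce this corollary directly from \cref{thm:kernel}. The multiplicity of the zero eigenvalue of a square matrix (in the sense of the geometric multiplicity, or equivalently the algebraic multiplicity once one notes that $S_w$ restricted to its generalized zero eigenspace behaves well under the conjugation-like transformation) equals the dimension of its kernel, so it suffices to show that $\dim \ker(S_w)$ depends only on the conformal class $[w]$.

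First I would let $\tilde{w} \in [w]$ and apply \cref{def:operator} to produce invertible diagonal matrices $D_\alpha, D_\beta$ with $S_{\tilde{w}} = D_\alpha S_w D_\beta$. Then I would invoke \cref{thm:kernel}, which already establishes $\ker(S_{\tilde{w}}) = D_\beta^{-1} \cdot \ker(S_w)$. Since $D_\beta^{-1}$ is an invertible linear map, it restricts to a vector space isomorphism between $\ker(S_w)$ and $\ker(S_{\tilde{w}})$, so these two kernels have the same dimension.

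There is essentially no obstacle here; the only subtlety worth noting is the distinction between geometric and algebraic multiplicity of the zero eigenvalue. For a general eigenvalue this distinction matters, but for the zero eigenvalue of a square matrix $M$, the geometric multiplicity is $\dim \ker(M)$, and if one prefers the algebraic multiplicity one can argue as follows: the characteristic polynomial satisfies $\det(\lambda I - S_{\tilde w}) = \det(\lambda I - D_\alpha S_w D_\beta)$, so setting $\lambda = 0$ and using multiplicativity of the determinant gives $\det(S_{\tilde w}) = \det(D_\alpha)\det(D_\beta)\det(S_w)$, and more generally the order of vanishing at $\lambda = 0$ is preserved because conjugation by $D_\beta$ yields $\det(\lambda I - D_\alpha S_w D_\beta) = \det(\lambda I - D_\beta D_\alpha S_w)$, which has the same zero-order of vanishing as $\det(\lambda I - S_w)$ up to a nonzero constant factor on the leading terms (one can match multiplicities by a direct rank argument on powers of $S_w$). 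In practice for the corollary as stated, the geometric interpretation via $\dim \ker$ suffices and the argument is a one-line consequence of \cref{thm:kernel}.

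I would conclude by remarking that the same argument applies verbatim to conformally covariant operators on $\mathrm{Hom}(V,\R)$, giving the analogous statement there.
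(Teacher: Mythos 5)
Your main argument is correct and is essentially the paper's own: the paper deduces the corollary in one line from \cref{thm:kernel}, noting that by its convention the multiplicity of the zero eigenvalue \emph{means} the dimension of the nullspace, which transforms by the isomorphism $\ker(S_{\tilde w}) = D_\beta^{-1}\cdot\ker(S_w)$ and hence has conformally invariant dimension. That part of your write-up needs nothing more.

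However, your optional aside about the \emph{algebraic} multiplicity is false as stated, and worth flagging since you assert it with a purported proof sketch. For a general (non-symmetric) conformally covariant operator, the transformation $S_{\tilde w} = D_\alpha S_w D_\beta$ is not a similarity, and the algebraic multiplicity of $0$ need not be preserved: take
$M = \begin{pmatrix} 1 & 1 \\ -1 & -1 \end{pmatrix}$ and $D = \operatorname{diag}(2,1)$; then $M$ has eigenvalue $0$ with algebraic multiplicity $2$ (and geometric multiplicity $1$), while $DM = \begin{pmatrix} 2 & 2 \\ -1 & -1 \end{pmatrix}$ has eigenvalues $0$ and $1$, so the algebraic multiplicity drops to $1$. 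Your claim that $\det(\lambda I - D_\beta D_\alpha S_w)$ has the same order of vanishing at $\lambda = 0$ as $\det(\lambda I - S_w)$ fails for the same reason, and the proposed ``rank argument on powers'' fails too: in the example $M^2 = 0$ but $(DM)^2 \neq 0$. The paper sidesteps all of this by defining multiplicity as nullity (and for its principal examples $A(F,w)$ and $\Delta(F,w)$, which are symmetric and transform with $D_\alpha = D_\beta = D_u$, geometric and algebraic multiplicities coincide anyway). So you should either delete the aside or restrict it to the symmetric case.
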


\begin{theorem}\label{thm:signature_invariant}
Let $S_w$ be a conformally covariant operator on $\textrm{Hom}(E,\R)$ or $\textrm{Hom}(V,\R)$. 
Then, the number of positive and negative 
eigenvalues, and the multiplicity of the zero eigenvalues of $S_w$ are conformal invariants. 
\end{theorem}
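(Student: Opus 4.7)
The plan is to reduce the claim to Sylvester's Law of Inertia. The multiplicity of the zero eigenvalue has already been handled in \Cref{cor:zero_eigenvalues}, so what remains is to show that the positive and negative eigenvalue counts of $S_w$ are invariant under the conformal change $w \mapsto \tilde w$. Since these counts are only meaningful when the spectrum of $S_w$ is real, the implicit standing hypothesis is that $S_w$ is symmetric, as is the case for the adjacency matrix, the edge Laplacian, and the other examples of conformally covariant operators treated later in the paper.

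First I would use symmetry of both $S_w$ and $S_{\tilde w}$ in the transformation law $S_{\tilde w} = D_\alpha S_w D_\beta$ of \Cref{def:operator} to derive
\[
D_\alpha S_w D_\beta \;=\; D_\beta S_w D_\alpha.
\]
Writing $D_\alpha = \diag(a_1,\dots,a_n)$ and $D_\beta = \diag(b_1,\dots,b_n)$ with all $a_i,b_i > 0$, this is the entrywise condition $(a_i b_j - a_j b_i)(S_w)_{ij} = 0$. On any component of the index set connected by the nonzero pattern of $S_w$, the ratio $a_i/b_i$ is therefore constant, and for operators attached to a connected graph $G$ (to which one may reduce by the \emph{Remark} following \Cref{thm:homeo}) this forces $D_\alpha = c\,D_\beta$ for a single positive constant $c$.

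Then $S_{\tilde w} = c\,D_\beta S_w D_\beta = c\,D_\beta^T S_w D_\beta$ exhibits $S_{\tilde w}/c$ as congruent to $S_w$ via the invertible diagonal matrix $D_\beta$. Sylvester's Law of Inertia then yields that $S_w$ and $D_\beta^T S_w D_\beta$ share the same number of positive, negative, and zero eigenvalues, and multiplying by the positive scalar $c$ preserves the sign of each eigenvalue. Combining with \Cref{cor:zero_eigenvalues} for the zero eigenvalue multiplicity finishes the proof.

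The only technical obstacle is the reduction $D_\alpha = cD_\beta$: in the $\textrm{Hom}(V,\R)$ case it is immediate from connectedness of $G$, since the nonzero pattern of $S_w$ is supported on the edges of $G$; in the $\textrm{Hom}(E,\R)$ case one must verify that the analogous nonzero-pattern graph on edges is connected. This is automatic for the examples of \Cref{sec:adjacency,sec:incidence}, so no additional hypothesis is required beyond those already imposed in the paper.
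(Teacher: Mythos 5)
Your proof is correct under your stated symmetry hypothesis, but it takes a genuinely different route from the paper's. The paper proves \cref{thm:signature_invariant} by a deformation argument: the conformal class $[w_0]$ is path-connected, the eigenvalues of $S_{w_t}$ vary continuously along a path $w_t$ joining two weights in the class, and the multiplicity of the zero eigenvalue is constant along the path by \cref{cor:zero_eigenvalues}; hence no eigenvalue can cross $0$ and the counts of positive and negative eigenvalues are constant. You instead work algebraically: from symmetry of $S_w$ and $S_{\tilde{w}}$ you extract the entrywise relation $(a_ib_j-a_jb_i)(S_w)_{ij}=0$, force $D_\alpha=c\,D_\beta$ via connectedness of the nonzero pattern, and invoke Sylvester's Law of Inertia. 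Two remarks on the comparison. First, your ``only technical obstacle'' can be removed entirely, which tightens your argument: set $P=\diag\bigl(\sqrt{a_1b_1},\ldots,\sqrt{a_nb_n}\bigr)$; wherever $(S_w)_{ij}\neq 0$ one has $a_ib_j=a_jb_i$, so $p_ip_j=\sqrt{(a_ib_j)(a_jb_i)}=a_ib_j$, and therefore $S_{\tilde{w}}=D_\alpha S_w D_\beta=P\,S_w\,P$ is \emph{exactly} a congruence by a positive diagonal matrix --- no constant $c$, no reduction to connected $G$, and no verification of connectivity of the edge-pattern graph in the $\textrm{Hom}(E,\R)$ case. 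Second, the trade-off between the two proofs: your route is purely algebraic, avoids any topology on $[w]$, and actually proves more (the operators in a conformal class are mutually congruent, not merely isospectral in signature), but it genuinely requires $S_w$ symmetric, whereas the paper's continuity argument needs only that the spectrum is real and depends continuously on the weight --- a weaker hypothesis that also covers non-symmetric examples such as the random-walk matrix $M_w=(D_w)^{-1}A_w$ of \cref{sec:adjacency}, to which Sylvester does not apply directly. (The paper tacitly makes the same reality assumption you make explicit; note also that its closing section on open problems observes that Sylvester's theorem preserves signature under more general transformation laws, so your argument realizes that remark rather than the proof the paper actually gives.)
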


\begin{proof}
The proof is similar to the proof of the corresponding result in \cite{dima1,dima2}.  Assume 
for contradiction that there exist two weights $w_1\in [w_0]$ such that the signatures 
of $S_{w_1}$ and $S_{w_0}$ are different.  The conformal class $[w_0]$ is a path connected 
space, hence there exists a curve $w_t,t\in[0,1]$ starting at $w_0$ and ending at $w_1$. 
The eigenvalues of $S_{w_t}$ depend continuously on $t$, and the multiplicity of $0$ 
is constant by Corollary \ref{cor:zero_eigenvalues}.  That means that the number of positive 
and negative eigenvalues of $A_{w_t}$ remains constant, which is a contradiction.  
\end{proof}

Let $M$ be a matrix, then the {\em signature} $\textrm{sig}(M)$ is the triple $(N_+(M), N_0(M),N_-(M))$, where $N_+(M)$, $N_0(M)$, and $N_-(M)$ are the number of positive, zero, and negative eigenvalues of $M$ respectively. In this setting, \cref{thm:signature_invariant} says that the signature $\textrm{sig}(S_w)$ is a conformal invariant, when $S_w$ is a conformally covariant operator.

Large classes of operators, as described in \cref{sec:CCO:definition}, satisfy the above hypothesis on the transformation matrices.

Let $S_w$ be a conformally covariant operator on $\textrm{Hom}(E,\R)$ (the analogues still hold if $S_w$ acts on $\textrm{Hom}(V,\R)$). Now, order the eigenvalues of $S_w$ as
$$
\lambda_1(S_w) \leq \lambda_2(S_w) \leq \ldots \leq \lambda_{|E|}(S_w).
$$
Our previous considerations imply the following about the sign of $\lambda_1(S_w)$:
\begin{enumerate}
\item
$\lambda_1(S_w) < 0$ iff the number of negative eigenvalues of $S_w$ is greater or equal to 1.
\item
$\lambda_1(S_w) = 0$ iff the number of zero eigenvalues is $\eta > 0$ and the number of positive eigenvalues is $|E|-\eta$.
\item
$\lambda_1(S_w) > 0$ iff the number of positive eigenvalues of $S_w$ is equal to $|E|$.
\end{enumerate}
It follows that

\begin{corollary}\label{cor:sign_first_eigenvalue}
The sign of $\lambda_1(S_w)$ is a conformal invariant.
\end{corollary}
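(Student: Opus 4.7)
The plan is to read off the sign of $\lambda_1(S_w)$ directly from the signature $\textrm{sig}(S_w) = (N_+(S_w), N_0(S_w), N_-(S_w))$ and then appeal to \cref{thm:signature_invariant}, which guarantees that this triple is constant on the conformal class $[w]$. In other words, the three-case dichotomy immediately preceding the corollary in the text already does all of the combinatorial work, so the proof amounts to packaging that dichotomy into a function of the signature.

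More concretely, I would first note that since $\lambda_1(S_w)$ is the smallest eigenvalue, its sign is equivalent to the following conditions: $\lambda_1(S_w) < 0$ iff $N_-(S_w) \geq 1$; $\lambda_1(S_w) = 0$ iff $N_-(S_w) = 0$ and $N_0(S_w) \geq 1$; and $\lambda_1(S_w) > 0$ iff $N_-(S_w) = N_0(S_w) = 0$. These are precisely the three cases listed above the corollary, just rephrased so that each one is a purely combinatorial condition on the entries of $\textrm{sig}(S_w)$.

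Now let $\tilde{w} \in [w]$ be any other weight in the conformal class. By \cref{thm:signature_invariant}, we have $N_+(S_{\tilde{w}}) = N_+(S_w)$, $N_0(S_{\tilde{w}}) = N_0(S_w)$, and $N_-(S_{\tilde{w}}) = N_-(S_w)$. Since the sign of $\lambda_1$ is determined by this triple via the conditions above, we conclude $\textrm{sign}(\lambda_1(S_{\tilde{w}})) = \textrm{sign}(\lambda_1(S_w))$, as required.

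There is no real obstacle here: the statement is a formal consequence of \cref{thm:signature_invariant} together with the observation that the sign of the minimum of a finite collection of real numbers depends only on how many of them are negative, zero, and positive. The only thing to be careful about is that one should cite \cref{thm:signature_invariant} (rather than only \cref{cor:zero_eigenvalues}), since the case $\lambda_1 > 0$ requires the invariance of $N_-$, not merely of $N_0$.
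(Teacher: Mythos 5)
Your proof is correct and is essentially the same as the paper's: the text preceding \cref{cor:sign_first_eigenvalue} establishes exactly the same three-case dichotomy reading off the sign of $\lambda_1(S_w)$ from the signature, and then invokes \cref{thm:signature_invariant} to conclude invariance. Your added remark that one needs the invariance of $N_-$ (hence the full \cref{thm:signature_invariant}, not just \cref{cor:zero_eigenvalues}) is a correct and slightly more careful articulation of the same argument.
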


\begin{remark}
As a consequence of \cref{prop:independent1} and \cref{prop:independent2}, the above statements are vacuous for the edge Laplacian $\Delta(E,w)$ and $\Delta(\emptyset,w)$, as the dimensions of their kernels are independent of the weight $w$.
Thus, the dimension of the cycle subspace $\textrm{ker}(\Delta(E,w))$ is also independent of $w$.
\end{remark}

%%%

%%%%%%%

\section{Adjacency matrices}\label{sec:adjacency}

Recall the adjacency matrix $A_w$ associated to the weighted graph $(G,w)$, as in \cref{eqn:adjacency}. 
For any subset $F \subset E$, the \emph{generalized adjacency matrix} $A(F,w)$ is the 
$|V| \times |V|$ matrix where the sign of the entries in $A_w$ has been changed for the edges 
$e\in F$; it is given by
\begin{equation}\label{eqn:generalized_adjacency}
A(F,w)_{ij} = \begin{cases}
-[A_w]_{ij}, & \ \  (v_i,v_j) \in F,\\
[A_w]_{ij}, & \ \ \textrm{ otherwise.}
\end{cases} 
\end{equation} 

\begin{theorem}
For each $F \subset E$, $A(F,w)$ is a conformally covariant operator.
\end{theorem}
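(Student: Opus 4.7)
The plan is to verify the transformation law \eqref{transform:graphs} directly by exhibiting the two diagonal matrices explicitly from the conformal factor, and then checking entrywise that the proposed identity holds.

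First, I would fix $\tilde{w} \in [w]$ with conformal factor $u \in \textrm{Hom}(V,\R)$, so that by \eqref{conformal:transform} we have $\tilde{w}(v_i,v_j) = w(v_i,v_j)\, e^{u(v_i)+u(v_j)}$ for every edge. The key observation is that the sign change in \eqref{eqn:generalized_adjacency} depends only on whether an edge lies in $F$, not on the weight chosen: the same set $F$ is used to define $A(F,w)$ and $A(F,\tilde{w})$. Consequently, for every pair of indices,
\[
[A(F,\tilde{w})]_{ij} = \varepsilon_{ij}\,\tilde{w}(v_i,v_j) = \varepsilon_{ij}\,w(v_i,v_j)\,e^{u(v_i)}e^{u(v_j)} = [A(F,w)]_{ij}\,e^{u(v_i)}e^{u(v_j)},
\]
where $\varepsilon_{ij} \in \{-1, 0, +1\}$ is the same sign on both sides (and equals zero when $(v_i,v_j) \notin E$).

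Second, I would introduce the $|V| \times |V|$ diagonal matrix $D_u$ with $[D_u]_{ii} = e^{u(v_i)}$. Sandwiching any matrix by $D_u$ on both sides inserts exactly the factor $e^{u(v_i)}e^{u(v_j)}$ into the $(i,j)$-entry, so the entrywise identity above reads $A(F,\tilde{w}) = D_u\, A(F,w)\, D_u$. Setting $D_\alpha = D_\beta = D_u$ in \cref{def:operator} then verifies conformal covariance: the matrix $D_u$ is invertible, has positive diagonal entries, and depends only on the conformal factor $u$ relating $w$ and $\tilde{w}$.

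There is essentially no obstacle in this argument; it reduces to the elementary observation that the vertex-multiplicative factorization $e^{u(v_i)+u(v_j)} = e^{u(v_i)} \cdot e^{u(v_j)}$ of the conformal factor is exactly compatible with the natural action of diagonal vertex matrices on either side. As a special case, taking $F = \emptyset$ shows that the usual adjacency matrix $A_w$ is itself conformally covariant, which is one of the instances advertised in the abstract.
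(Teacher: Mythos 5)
Your proposal is correct and follows the same route as the paper's proof: both set $D_u = \mathrm{diag}(e^{u(v_1)},\ldots,e^{u(v_n)})$ and verify entrywise that $A(F,\tilde{w}) = D_u\, A(F,w)\, D_u$, with $D_\alpha = D_\beta = D_u$. Your write-up just makes explicit the (correct) observation that the sign pattern $\varepsilon_{ij}$ determined by $F$ is weight-independent, which the paper leaves implicit.
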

This statement remains true even in the more general case when we allow the graph to have loops.
\begin{proof} 
Let $\tilde{w}(v_i,v_j) = e^{u(v_i) + u(v_j)} w(v_i,v_j)$ for $(v_i,v_j) \in E$ and $u \in \textrm{Hom}(V,\R)$. 
Then, $A(F,\tilde{w})= D_u A(F,w) D_u$
where $D_u = \textrm{diag}(e^{u(v_1)},\ldots, e^{u(v_n)})$. 
\end{proof}

Notice that the case $F = \emptyset$ gives that $A(F,w) = A_w$, so the adjacency matrix is also conformally covariant. 
Furthermore, it follows that the ``random walk'' matrix $M_w = (D_w)^{-1} A_w$, which consists of the probability of travelling from one vertex to another along a random walk, is also conformally covariant.

\begin{remark}
The vertex Laplacian is not in general conformally covariant.
\end{remark}

\begin{remark}
A generalized adjacency matrix $A(F,w)$ remains conformally covariant if we allow the graph $G$ to contain loops. The same holds for the matrices $A_0(F,w)$ defined in \cref{eqn:ps_matrix}.
\end{remark}

%%%%%

\subsection{Ranks of generalized adjacency matrices}  
Let $G$ be finite simple graph, which in this section we assume to be connected.  

\begin{definition}\label{def:minmax:rank}
The {\em maximal} rank $\maxrank(G)$ is equal to $\sup_{F,w}\rank A(F,w)$. 
The {\em minimal} rank $\minrank(G)$ is equal to $\inf_{F,w}\rank A(F,w)$. 
\end{definition}
Equivalently, this is the largest (respectively, the smallest) rank of a symmetric matrix 
with zero diagonal, whose off-diagonal entries are nonzero off the corresponding edges 
are neighbours in $G$ (we do not allow zero edge weights).  

\begin{example}
Let $S_k$ be a {\em star graph} with $k+1$ vertices; a elementary calculation shows that 
$\maxrank(S_k)=\minrank(S_k)=2$.  The same holds for $G=K_{a,b}$, the complete 
bipartite graph. 
\end{example} 
Related questions are discussed e.g. in \cite{FH}, and the nullity of combinatorial graphs was discussed e.g. in \cite{nullity:survey}.

It was shown in \cite{royle} that adjacency matrices of {\em cographs} have full rank. 
Recall that $G$ is a cograph, or complement-reducible graph, iff $G$ {\em does not} have the path $P_4$ on 4 vertices as an induced subgraph.     

The maximal size of a graph whose adjacency matrix has a given rank was studied in 
\cite{KL,HP}.  

The rank of adjacency matrices of Erd\"{o}s-Renyi random graphs was studied in \cite{CostelloVu}; 
it was shown in \cite[Prop. 3.2.2]{Fernandez} that adjacency matrices of certain families of 
random graphs have full rank with probability going to $1$.  

%%%%%%

\subsection{Partition of the space of conformal classes}\label{sec:partition}
We next discuss a partition of the set of conformal classes defined by the signature of a (generalized) 
adjacency matrix.   

\subsubsection{The case $\maxrank(G)=|V(G)|$}  
Assume that $\maxrank(G)=|V(G)|=n$.   Then there exists a choice of $F$ such that
for all weights $w$ in an open subset of $\cW$, $0$ is not an eigenvalue of the adjacency matrix $A(F,w)$; a similar statement holds on an open subset of the conformal moduli space $\cM$.    
In this section, we restrict our attention to $F$ satisfying $\sup_w\rank A(F,w)=\maxrank(G)$.  

\begin{definition}\label{def:disc:fullrank} 
Fix a subset $F\subset E$ of the set of edges of a graph $G$ satisfying $\maxrank(G)=n$.  
A {\em discriminant hypersurface} $\cD(F)_\cW$ in the weight space $w\in\cW$ is the set of 
all weights such that the generalized adjacency matrix $A(F,w)$ has eigenvalue $0$.  Since the multiplicity of zero of $A(F,w)$ is conformally invariant, this defines a hypersurface  
$\cD(F)_\cM$ in the conformal moduli space $\cM$.  
\end{definition}

\begin{example}\label{ex:hypersurface1}
Let $C_5$ be the 5-cycle with vertices $\{ v_1,\ldots,v_5\}$. Let $G_{5,2}$ be the non-bipartite graph obtained from $C_5$ by adding the edges $(v_1,v_3)$, and $(v_1,v_4)$. Remark that $\dim \cM(G_{5,2}) = 2$. The discriminant hypersurface $D(\emptyset)_{\cM}$ associated to the standard adjacency matrix $A(\emptyset, w)$ is the subset
\[
D(\emptyset)_{\cM} = \{ (a,b) \in \cM(G_{5,2}) \colon (ab)^4 = a^3 + b^3 \}.
\]
The curve $D(\emptyset)_{\cM}$ in $\cM(G_{5,2})$ is depicted in \cref{fig:discriminant_hypersurface}.
\begin{figure}[h]
    \centering
    \includegraphics[scale=0.3]{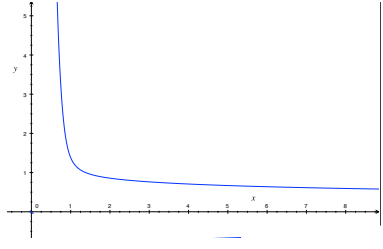}
    \caption{The discriminant hypersurface partitions $\cM(G_{5,2})$ into 2 components.}
    \label{fig:discriminant_hypersurface}
\end{figure}
\end{example}

Consider a curve $w(t)\in\cW,t\in[0,1]$.  It is clear that to change the signature of 
$A(F,w(t))$ the curve 
$w(t)$ has to cross $\cD(F)_\cW$.  We conclude that 
\begin{proposition}\label{sign:discriminant}
The signature of $A(F,w)$ does not change on connected components of 
$\cW\setminus\cD(F)_\cW$ and $\cM\setminus\cD(F)_\cM$.  
\end{proposition}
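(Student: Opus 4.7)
The plan is to mimic the proof of \cref{thm:signature_invariant}, but with the constant-signature curves now running inside a connected component of the complement of the discriminant rather than inside a single conformal class. The two key inputs are that $A(F,w)$ is a real symmetric matrix whose entries depend continuously (in fact polynomially) on $w$, and that, by the very definition of $\cD(F)_\cW$, the matrix $A(F,w)$ has no zero eigenvalue when $w \in \cW \setminus \cD(F)_\cW$.

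First I would fix a connected component $U \subseteq \cW \setminus \cD(F)_\cW$ and two weights $w_0, w_1 \in U$. Identifying $\cW$ with the open orthant $(\R_+)^{|E|}$, the subset $U$ is open and locally path-connected, so it is path-connected, and I can choose a continuous path $w \colon [0,1] \to U$ joining $w_0$ and $w_1$. By standard perturbation theory for real symmetric matrices (Rellich's theorem), the eigenvalues of $A(F,w(t))$ can be arranged as continuous functions $\lambda_1(t) \leq \cdots \leq \lambda_n(t)$ on $[0,1]$. Since each $\lambda_i(t)$ is continuous and never zero (as $w(t) \notin \cD(F)_\cW$), the intermediate value theorem forces each $\lambda_i$ to have constant sign. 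Hence $N_+$, $N_-$, and $N_0$ agree at $w_0$ and $w_1$, so $\textrm{sig}(A(F,w))$ is locally constant on $\cW \setminus \cD(F)_\cW$.

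For the statement on $\cM$, \cref{thm:signature_invariant} already tells me that the signature descends to a well-defined function on the conformal moduli space, and \cref{cor:zero_eigenvalues} guarantees that $\cD(F)_\cM$ is a well-defined subset of $\cM$. Since $\cM \cong \R^{|E|-|V|+\omega_0}$ by \cref{thm:homeo}, it is locally path-connected, and the same IVT argument applies either by lifting a path of conformal classes to a continuous path of canonical representatives via \cref{prop:representative}, or by pushing a path in $\cW \setminus \cD(F)_\cW$ forward under the quotient map $\cW \to \cM$.

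The only potential obstacle is the continuous parametrization of the eigenvalues, since in general non-symmetric perturbations can cause eigenvalues to lose smoothness at crossings; but for real symmetric matrices Rellich's theorem guarantees a continuous (even real-analytic) selection, so no such issues arise and the entire proof reduces to a single application of the intermediate value theorem.
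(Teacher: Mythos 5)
Your proof is correct and follows essentially the same route as the paper, which disposes of \cref{sign:discriminant} with the terse observation that a curve $w(t)$ in $\cW$ must cross $\cD(F)_\cW$ for the signature of $A(F,w(t))$ to change; you merely make explicit the path-connectedness of components, the continuity of eigenvalues, the intermediate value theorem, and the descent to $\cM$ via \cref{thm:signature_invariant} and \cref{cor:zero_eigenvalues}. One pedantic note: Rellich's theorem requires (real-)analytic dependence on the parameter, which a merely continuous path need not provide, but the fact you actually use --- that the ordered eigenvalues $\lambda_1(t)\leq\cdots\leq\lambda_n(t)$ of a continuous family of real symmetric matrices are continuous --- follows already from Weyl's perturbation inequality, so the argument stands.
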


Since $\tr A(F,w)=0$ for any choice of $F\subset E$ and $w\in\cW$, we find that $A(F,w)$ 
always has at least one positive eigenvalue, and at least one negative eigenvalue.  We let  
$N_+(A)$ be the number of {\em positive} eigenvalues of $A$; and 
$N_-(A)$ be the number of {\em negative} eigenvalues of $A$.  

\begin{definition}\label{def:posneg:F}
We let $Pos_F(G)\geq 1$ to be $\sup_{w}N_+(A(F,w))$. Similarly, 
we let  $Neg_F(G)\geq 1$ to be $\sup_{w}N_-(A(F,w))$.
\end{definition}

The signature of $A(F,w)$ ranges between $(Pos_F(G),n-Pos_F(G))$ and \\ 
$(n-Neg_F(G),Neg_F(G))$.  
\begin{definition}\label{def:signature:list}
The {\em signature list} $\operatorname{List}_F(G)$ is the list of all possible signatures 
of $A(F,w)$, for different $w$.  
\end{definition}

It seems interesting to study the {\em number,}  {\em topology} and {\em geometry} 
of connected components of $\cM\setminus\cD(F)_\cM$.  

\begin{example}
Let $C_6$ be the 6-cycle with vertices $\{ v_1,\ldots,v_6\}$. Let $G_{6,3}$ be the non-bipartite graph obtained from $C_6$ by adding the edges $(v_1,v_5)$, $(v_2,v_4)$, and $(v_3,v_6)$. Remark that $\dim \cM(G_{6,3}) = 3$. Let $F = \{ (v_1,v_2)\}$, then the discriminant hypersurface $\mathcal{D}(F)_{\cM}$ associated to the generalized adjacency matrix $A(F,w)$ is the graph seen in \cref{fig:discriminant_hypersurface2}, which is cut out by the equation
\[
x^{8}y^{2}z^{2}-4({xyz})^{6}+x^{5}({2y^{2}z^{5}-2y^{5}z^{2}})+2x^{4}yz+({xyz})^{2}({y^{3}-z^{3}})^{2}+x({2yz^{4}-2y^{4}z})+1=0.
\] 
The signature of $A(F,w)$ is described by the following list:
\begin{enumerate}
\item If $w \in \cM$ is in the component whose boundary contains the origin, then the signature of $A(F, w)$ is $(3,0,3)$.
\item If $w \in \mathcal{D}(F)_{\cM}$, then the signature of $A(F,w)$ is $(3,1,2)$.
\item Otherwise, the signature of $A(F, w)$ is $(4,0,2)$.
\begin{figure}[h]
    \centering
    \includegraphics[scale=0.8]{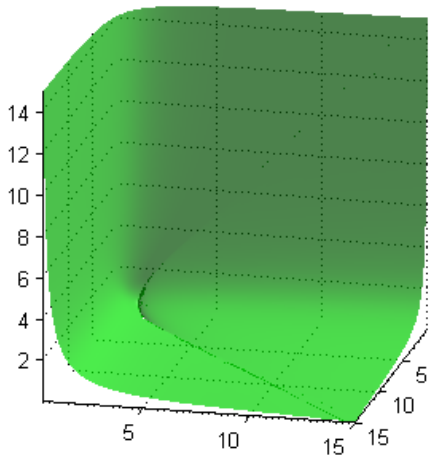}
    \caption{The discriminant hypersurface partitions $\cM(G_{6,3})$ into 2 components.}
    \label{fig:discriminant_hypersurface2}
\end{figure}
\end{enumerate}
\end{example}

We remark that the spectrum of unweighted adjacency matrix $A(G)$ has been 
studied extensively.  In particular, Graham and Pollack showed in \cite{loopswitch} that 
{\em biclique partition number} $bp(G)$ satisfies $bp(G)\geq\max\{N_+(A(G)),N_-(A(G))\}$.  

\subsubsection{The case $\maxrank(G)<V(G)$}  
In this section we consider graphs with $\maxrank(G)<V(G) = n$.  In addition, we consider 
graphs with $\maxrank(G)=n$ and subsets $F\subset E(G)$ satisfying 
$\sup_w\rank A(F,w))<\maxrank(G)$.  In those cases, $0$ is an eigenvalue of $A_F(w)$ 
for all $w\in\cW$.  

We adjust the definition of the discriminant hypersurface: 
\begin{definition}\label{def:disc:smallrank}  
A {\em discriminant hypersurface} $\cD(F)_\cW$ in the weight space $w\in\cW$ is the set of 
all weights $w$ such that the generalized adjacency matrix $A(F,w)$ satisfies 
$\rank(A(F,w))<\maxrank(G)$.  Since the multiplicity of zero of $A(F,w)$ is conformally 
invariant, this defines a hypersurface  $\cD(F)_\cM$ in the conformal moduli space $\cM$.  
\end{definition}

Proposition \ref{sign:discriminant} easily extends. Consequently, this allows one to extend the Definitions 
\ref{def:posneg:F} and  \ref{def:signature:list}. 

%%%%

\subsection{Bipartite graphs} 
It is well-known that $\tr(A(F,w)^k)=\sum_{|\gamma|=k}\prod_{e\in\gamma}w(e)$; here the sum 
is taken over all closed paths of length $k$ in $G$, and the edges in $F\subset E$ have negative 
weights. In bipartite graphs, all closed paths have even length.   Accordingly, 
for any odd  $k\geq 1$ we have 
$$
\tr (A(F,w)^k)=0.  
$$
It follows that the set of eigenvalues of $A(F,w)$ is symmetric around $0$.  Accordingly, 
\begin{proposition}\label{prop:bipartite:adjacency}
Let $G$ be a bipartite simple connected graph.  Then the signature of $A(F,w)$ is always 
of the form $k,k$ for any $F\subset E$ and $w\in\cW$.  If $|V(G)|$ is {\em even} (resp. {\em odd}), then the multiplicity of $0$ as an eigenvalue of $A(F,w)$ is also {\em even} (resp. {\em odd}).  In particular, 
if $|V(G)|$ is odd, then $\ker A(F,w)$ is always nontrivial.   
\end{proposition}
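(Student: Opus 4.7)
The plan is to build on the trace observation provided in the paragraph preceding the proposition, but to actually derive the conclusions via an explicit bipartite block structure, which gives both the symmetry of the spectrum and the parity statement in one stroke.

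First, since $G$ is bipartite, fix a bipartition $V = V_1 \sqcup V_2$ and order the vertices so that those in $V_1$ come first. Because every edge of $G$ goes between $V_1$ and $V_2$, and this remains true regardless of which edges we have placed in $F$, the generalized adjacency matrix has the block form
\begin{equation*}
A(F,w) = \begin{pmatrix} 0 & B \\ B^T & 0 \end{pmatrix},
\end{equation*}
where $B$ is $|V_1| \times |V_2|$ with entries $\pm w(v_i,v_j)$ on edges (sign determined by $F$) and $0$ elsewhere. The diagonal blocks vanish precisely because bipartiteness forbids edges inside $V_1$ or inside $V_2$.

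Next I would exhibit the spectral symmetry explicitly by a sign-flip involution. If $(x,y)^T \in \R^{|V_1|} \oplus \R^{|V_2|}$ satisfies $A(F,w)(x,y)^T = \lambda (x,y)^T$, then directly checking gives $A(F,w)(x,-y)^T = -\lambda(x,-y)^T$. The map $(x,y)\mapsto (x,-y)$ is a linear isomorphism sending the $\lambda$-eigenspace to the $(-\lambda)$-eigenspace, hence $N_+(A(F,w)) = N_-(A(F,w)) =: k$. This forces the signature to be $(k, N_0, k)$ as claimed. (This is a concrete realization of the symmetry that the trace computation $\tr(A(F,w)^{2j+1})=0$ already suggests via Newton's identities applied to the characteristic polynomial.)

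Finally, since $N_+ + N_0 + N_- = |V|$, we get $N_0 = |V| - 2k$, which has the same parity as $|V|$. In particular, when $|V|$ is odd, $N_0 \geq 1$ and $\ker A(F,w)$ is nontrivial. I do not expect any serious obstacle here; the only small point worth being explicit about is that the $F$-induced sign changes do not disturb the block-off-diagonal shape, which is immediate because flipping a sign of an entry $B_{ij}$ (and symmetrically $(B^T)_{ji}$) keeps the diagonal blocks zero.
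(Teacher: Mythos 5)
Your proof is correct, but it takes a genuinely different route from the paper's. The paper does not give the proposition a standalone proof; instead it derives the spectral symmetry from the combinatorial trace identity $\tr(A(F,w)^k)=\sum_{|\gamma|=k}\prod_{e\in\gamma}w(e)$ (sum over closed walks of length $k$, with negative weights on $F$): bipartiteness forbids odd closed walks, so $\tr(A(F,w)^k)=0$ for all odd $k$, and vanishing of all odd moments of a real symmetric matrix forces the eigenvalue multiset to be symmetric about $0$ --- a step the paper asserts and which, if spelled out, needs a Newton's-identities (power sums determine the spectrum) argument. You instead order the vertices by the bipartition, observe that $A(F,w)$ is block anti-diagonal,
\begin{equation*}
A(F,w)=\begin{pmatrix} 0 & B \\ B^{T} & 0 \end{pmatrix},
\end{equation*}
(correctly noting that the $F$-sign flips live only in the off-diagonal blocks), and conjugate by the involution $(x,y)\mapsto(x,-y)$, i.e.\ by $\mathrm{diag}(I,-I)$, which sends $A(F,w)$ to $-A(F,w)$ and gives an explicit isomorphism between the $\lambda$- and $(-\lambda)$-eigenspaces. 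This is more elementary and more self-contained: it replaces the moment argument with a one-line similarity and even produces the eigenvector correspondence explicitly. What the paper's trace approach buys in exchange is machinery reused elsewhere in that section (e.g.\ $\tr A(F,w)=0$ already yields at least one positive and one negative eigenvalue for arbitrary, not necessarily bipartite, $G$), whereas your block decomposition is special to the bipartite case. The parity endgame --- $N_0=|V|-2k$, so $N_0\equiv|V|\pmod 2$ and $|V|$ odd forces $\ker A(F,w)\neq 0$ --- is the same in both treatments and you state it correctly.
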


%%%%%

\section{Incidence matrices}\label{sec:incidence}

We next describe a class of conformally covariant operators constructed using 
the (weighted) incidence matrix, or related matrices defined below.

Let $G = (V,E)$ be a finite simple graph. Let $F\subset E$ be a subset of edges which we shall {\em orient:} for each 
$e\in F$ we shall choose a {\em head vertex} $e^+\in V$ and a {\em tail vertex} $e^-\in V$.  We next define a variant 
of a well-known {\em incidence matrix} as follows.   
Enumerate the vertex set $V = \{ v_1, \ldots, v_n\}$ and the edge set $\vec{E} = \{ e_1, \ldots, e_m\}$. The signed weighted 
vertex-edge incidence matrix $M(F,w)$ (see \cite{biggs, chung} for related constructions) is an $n \times m$ 
matrix given by
\begin{equation}\label{signed:incidence} 
M(F,w)_{ij} = \begin{cases}
\sqrt{w(e_j)} & \colon \textrm{if }e_j\in F,\  v_i = e_j^+, \\
-\sqrt{w(e_j)} & \colon \textrm{if }e_j\in F,\  v_i = e_j^-, \\ 
\sqrt{w(e_j)} & \colon \textrm{if }e_j\notin F, v_i\sim e_j,\\ 
0 & \colon \textrm{if } v_i\not\sim e_j.  
\end{cases}
\end{equation}

Let $\tilde{w} \in [w]$ be a different weight in $[w]$ given by the function $u \in\textrm{Hom}(V,\R)$.  Then it is easy to show that 
\begin{equation}\label{incidence:transform} 
M(F,\tilde{w}) = M(F,w) D_u,
\end{equation}
where $D_u$ is an invertible diagonal matrix given by
\begin{equation}\label{eqn:diagonal-matrix}
(D_u)_{ii} = \begin{cases}
e^{\frac{1}{2}(u(e_i^{+})+u(e_i^{-}))} &\colon \textrm{if }e_i\in F,\ \ e_i=(e_i^{-},e_i^{+}),  \\ 
e^{\frac{1}{2}(u(v_1(i))+u(v_2(i)))} & \colon \textrm{if }e_i\not\in F,\ \  e_i=(v_1(i),v_2(i)).  
\end{cases}
\end{equation}
In other words,
\begin{theorem}
For each $F \subset E$, $M(F,w)$ is a conformally covariant operator.
\end{theorem}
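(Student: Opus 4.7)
The plan is essentially a direct verification of the transformation law \eqref{incidence:transform} followed by checking that it fits the template \eqref{transform:graphs} of Definition \ref{def:operator}.

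First, I would fix $\tilde{w} \in [w]$ with conformal factor $u \in \textrm{Hom}(V,\R)$, so that $\tilde{w}(v_i,v_j) = w(v_i,v_j)\, e^{u(v_i)+u(v_j)}$ on every edge. The crucial scalar identity is $\sqrt{\tilde{w}(e_j)} = \sqrt{w(e_j)}\cdot e^{\frac{1}{2}(u(v_1(j))+u(v_2(j)))}$, where $e_j$ has endpoints $v_1(j),v_2(j)$ (or tail $e_j^-$ and head $e_j^+$ when $e_j\in F$). This shows that passing from $w$ to $\tilde{w}$ rescales the $j$th column of $M(F,w)$ by the positive scalar $(D_u)_{jj}$ from \eqref{eqn:diagonal-matrix}, uniformly over the four cases of \eqref{signed:incidence}.

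Next I would verify this case by case against \eqref{signed:incidence}: if $e_j\in F$ and $v_i = e_j^+$, the $(i,j)$ entry of $M(F,\tilde{w})$ is $\sqrt{\tilde{w}(e_j)} = \sqrt{w(e_j)}\cdot (D_u)_{jj}$, which matches the $(i,j)$ entry of $M(F,w)D_u$; the case $v_i = e_j^-$ is identical up to the minus sign; the case $e_j\notin F$ with $v_i\sim e_j$ is analogous; and when $v_i\not\sim e_j$, both matrices have zero in the $(i,j)$ slot, regardless of $D_u$. This establishes \eqref{incidence:transform}.

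Finally, to match Definition \ref{def:operator}, I would write
\[
M(F,\tilde{w}) = I_n \cdot M(F,w) \cdot D_u,
\]
where $I_n$ is the $n\times n$ identity (a trivially positive, invertible diagonal matrix) and $D_u$ is the $m\times m$ invertible diagonal matrix defined in \eqref{eqn:diagonal-matrix} (whose entries are manifestly positive and depend only on the conformal factor $u$). This exhibits the required factorization \eqref{transform:graphs} with $D_\alpha = I_n$, $D_\beta = D_u$, noting that the definition explicitly allows $D_\alpha$ and $D_\beta$ to have different sizes. I do not anticipate a serious obstacle here; the only point requiring a little care is the bookkeeping of signs and of the orientation of edges in $F$ when writing out the case analysis, to confirm that the rescaling factor in each row of a given column is indeed the same and therefore corresponds to multiplication by a diagonal matrix on the right.
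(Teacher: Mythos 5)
Your proof is correct and follows essentially the same route as the paper, which simply asserts the transformation law \eqref{incidence:transform} $M(F,\tilde{w}) = M(F,w)\,D_u$ with $D_u$ as in \eqref{eqn:diagonal-matrix}; your column-by-column case analysis just fills in the ``easy to show'' step. Taking $D_\alpha = I_n$ and $D_\beta = D_u$ to match Definition \ref{def:operator} is exactly the intended reading, since the definition permits the two diagonal matrices to have different sizes.
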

The signed incidence matrix (the discrete gradient) arises from this construction when $F = E$ and the unsigned incidence matrix is $M(\emptyset, w)$. Consequently, these operators are conformally covariant.

In addition, define the following generalization of the edge Laplacian: 
\begin{equation}\label{def:signed:edge:Laplacian} 
\Delta(F,w):=M(F,w)^T\cdot M(F,w).  
\end{equation}
It follows immediately that 
\begin{theorem}\label{thm:signed:edge:Laplacian}
Let $\tilde{w} \in [w]$ be a different weight in $[w]$ given by the function $u \in \mathcal{H}_G$.  Then 
$$
\Delta(F,\tilde{w})= D_u\cdot\Delta(F,w)\cdot D_u
$$
Accordingly, $\Delta(F,w)$ is a conformally covariant operator in the sense of \eqref{transform:graphs} 
for any choice of $F\subset E$. 
\end{theorem}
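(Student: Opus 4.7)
The plan is to deduce this directly from the transformation law \eqref{incidence:transform} for the incidence matrix $M(F,w)$, which was established in the preceding theorem. Since $\Delta(F,w)$ is defined as $M(F,w)^T M(F,w)$, the result should reduce to an algebraic manipulation involving the transpose of a product and the symmetry of diagonal matrices.

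The key steps I would carry out, in order, are as follows. First, apply the definition \eqref{def:signed:edge:Laplacian} to the weight $\tilde{w}$, giving $\Delta(F,\tilde{w}) = M(F,\tilde{w})^T M(F,\tilde{w})$. Second, substitute \eqref{incidence:transform} to obtain $\Delta(F,\tilde{w}) = (M(F,w) D_u)^T (M(F,w) D_u)$. Third, expand the transpose using $(AB)^T = B^T A^T$ to write this as $D_u^T M(F,w)^T M(F,w) D_u$. Fourth, observe that $D_u$ is a diagonal matrix, hence symmetric, so $D_u^T = D_u$. Finally, re-identify the middle factor $M(F,w)^T M(F,w)$ as $\Delta(F,w)$, yielding the desired identity
\[
\Delta(F,\tilde{w}) = D_u \cdot \Delta(F,w) \cdot D_u.
\]
To conclude conformal covariance in the sense of \cref{def:operator}, I would note that $D_u$ is invertible with positive diagonal entries (since its entries are exponentials of real numbers depending only on the conformal factor $u$), so we may take $D_\alpha = D_\beta = D_u$ in \eqref{transform:graphs}.

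There is no real obstacle here; the proof is essentially a one-line consequence of \eqref{incidence:transform} combined with the symmetry of diagonal matrices. The only point worth emphasizing is that $D_\alpha$ and $D_\beta$ in \cref{def:operator} need not have the same dimension in general, but in this case they happen to coincide because $\Delta(F,w)$ acts on $\textrm{Hom}(E,\R)$ and both factors must match the edge indexing of $D_u$ given in \eqref{eqn:diagonal-matrix}.
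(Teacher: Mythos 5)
Your proposal is correct and follows exactly the route the paper takes: the paper states the theorem with ``It follows immediately that,'' the implicit argument being precisely your computation $\Delta(F,\tilde{w}) = (M(F,w)D_u)^T(M(F,w)D_u) = D_u\,\Delta(F,w)\,D_u$ via \eqref{incidence:transform} and the symmetry of the diagonal matrix $D_u$. Your additional remark that one may take $D_\alpha = D_\beta = D_u$, with positive entries depending only on the conformal factor as required by \cref{def:operator}, is a useful explicit confirmation of the covariance claim but does not depart from the paper's approach.
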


We next discuss important special cases of Theorem \ref{thm:signed:edge:Laplacian} corresponding to different choices of 
$F\subset E$.  

%%%%%

\subsection{The edge Laplacian}\label{sec:edge-laplacian}

Consider first the case $F=E$; the operator $\Delta(E,w)$ corresponds to the weighted edge Laplacian of \cite{biggs}, so

\begin{corollary}
The edge Laplacian is a conformally covariant operator.
\end{corollary}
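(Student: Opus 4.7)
The plan is to deduce the corollary as an immediate specialization of \cref{thm:signed:edge:Laplacian}. First, I would set $F = E$ in the definition \eqref{def:signed:edge:Laplacian}, obtaining $\Delta(E,w) = M(E,w)^T M(E,w)$. By the sentence preceding the definition, the matrix $M(E,w)$ is the signed weighted vertex-edge incidence matrix (the discrete gradient), and so by the construction in \cite{biggs} the operator $\Delta(E,w)$ coincides with the weighted edge Laplacian.

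Second, I would apply \cref{thm:signed:edge:Laplacian} verbatim to the case $F = E$: for any $\tilde{w} \in [w]$ with conformal factor $u$, one has
\begin{equation*}
\Delta(E,\tilde{w}) = D_u \cdot \Delta(E,w) \cdot D_u,
\end{equation*}
where $D_u$ is the diagonal matrix defined in \eqref{eqn:diagonal-matrix}. To conclude, I would verify that this fits the template of \cref{def:operator} by taking $D_\alpha = D_\beta = D_u$ in the transformation law \eqref{transform:graphs}; the entries $(D_u)_{ii} = e^{\frac{1}{2}(u(e_i^+) + u(e_i^-))}$ are strictly positive, so $D_u$ is an invertible diagonal matrix with positive entries depending only on the conformal factor $u$, as required. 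There is no genuine obstacle here, since the content of the corollary is entirely contained in the preceding theorem; the only thing to be careful about is matching the indexing conventions for the diagonal matrix $D_u$ on $\mathrm{Hom}(E,\R)$ with the definition of conformal covariance.
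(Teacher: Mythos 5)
Your proposal is correct and matches the paper's own reasoning exactly: the corollary is stated in the paper as an immediate consequence of \cref{thm:signed:edge:Laplacian} specialized to $F = E$, with $D_\alpha = D_\beta = D_u$ furnishing the transformation law \eqref{transform:graphs}. Your additional check that the entries of $D_u$ are positive and depend only on the conformal factor, as \cref{def:operator} requires, is the right (if routine) verification.
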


\begin{proposition}\label{prop:independent1}
Given a connected weighted graph $(G,w)$, $\dim \emph{ker}(\Delta(E,w)) = |E|-|V|+1$. 
In particular, $\dim \emph{ker}(\Delta(E,w))$ is independent of $w$.
\end{proposition}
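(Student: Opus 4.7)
The plan is to reduce the computation to a rank statement about the signed weighted incidence matrix $M(E,w)$, and then observe that the weights drop out of the rank calculation, leaving a classical fact about connected graphs.

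First I would note that since $\Delta(E,w)=M(E,w)^T M(E,w)$, any vector $x \in \ker(\Delta(E,w))$ satisfies
\[
0 = x^T M(E,w)^T M(E,w) x = \|M(E,w)x\|^2,
\]
so $\ker(\Delta(E,w)) = \ker(M(E,w))$. By rank-nullity, it therefore suffices to show $\rank(M(E,w)) = |V|-1$, which then gives $\dim\ker(\Delta(E,w)) = |E|-(|V|-1)=|E|-|V|+1$ as claimed.

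Next, because every weight $w(e_j)$ is strictly positive, the $j$-th column of $M(E,w)$ is $\sqrt{w(e_j)}$ times the $j$-th column of the ordinary signed incidence matrix $M_0$ (the matrix with entries $\pm 1$ obtained by setting all weights equal to $1$). Scaling columns by nonzero scalars does not affect the rank, so $\rank(M(E,w)) = \rank(M_0)$. This reduction eliminates the weight from the problem entirely; in particular it explains why $\dim\ker(\Delta(E,w))$ is independent of $w$.

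Finally, I would compute $\rank(M_0)$ using connectedness. Each column of $M_0$ contains exactly one $+1$ and one $-1$, so $\mathbf{1}^T M_0 = 0$; hence $\rank(M_0)\le |V|-1$. For the matching lower bound, suppose $v^T M_0 = 0$; then for every edge $e=(v_i,v_j)\in E$ one has $v_i=v_j$, so $v$ is constant on each connected component. Since $G$ is connected, the left nullspace is spanned by $\mathbf{1}$, giving $\rank(M_0)=|V|-1$. There is no real obstacle here: the only thing to be careful about is the reduction $\ker(M^T M)=\ker(M)$ (which uses that we are over $\mathbb{R}$ with the standard inner product, as is the case by the definition of $\langle\cdot,\cdot\rangle_E$ given earlier).
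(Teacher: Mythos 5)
Your proof is correct, and after the shared first step it takes a genuinely different route from the paper. Both arguments begin by reducing to the incidence matrix: you spell out via $0 = \|M(E,w)x\|^2$ what the paper dismisses with ``Clearly, $\ker(\Delta(E,w)) = \ker(M(E,w))$'' (your version is the right level of detail, and your parenthetical that this uses positive-definiteness of the real inner product is exactly the point). From there the paper goes topological: it identifies the cycle space $\ker(M(E,w))$ with the first homology $H_1(G,\R)$ and reads off $\dim H_1(G,\R) = |E| - |E(T)| = |E|-|V|+1$ from a spanning tree $T$. You instead stay purely linear-algebraic: the column-scaling observation $\rank(M(E,w)) = \rank(M_0)$ (valid since each column is $\sqrt{w(e_j)}>0$ times the corresponding $\pm 1$ column, and the graph is simple so no loops spoil the ``one $+1$, one $-1$'' structure), followed by the classical left-kernel computation $v^T M_0 = 0 \Rightarrow v$ constant on components, so $\rank(M_0)=|V|-1$ by connectedness, and rank--nullity finishes. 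Each approach buys something: the paper's homological identification is shorter and situates the kernel as the cycle space, which the authors use again later (e.g.\ in the remark calling $\ker(\Delta(E,w))$ the cycle subspace), while your argument is more elementary (no homology needed) and makes the weight-independence of $\dim\ker(\Delta(E,w))$ visible at the precise step where the weights scale out of the rank, rather than implicitly through the fact that homology ignores weights. Your left-kernel argument also proves the spanning-tree dimension count that the paper cites as known, so nothing is left to an external fact.
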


\begin{proof}
Clearly, $\textrm{ker}(\Delta(E,w)) = \textrm{ker}(M(E,w))$. Now, the cycle space $\textrm{ker}(M(E,w))$ is isomorphic to the real homology $H_1(G,\R)$. For a connected graph, 
$\dim H_1(G,\R) = |E| - |E(T)|$, where $|E(T)| = |V|-1$ is the number of edges in any 
spanning tree $T$ of the combinatorial graph.
\end{proof}

\begin{remark} 
The results in this section seem to be related to the results in \cite{BG}.  The authors intend 
to study this relation further in a future paper. 
\end{remark}

%%%%

\subsection{The Case $F=\emptyset$}  
The other extreme example is where $F = \emptyset$: \cref{thm:signed:edge:Laplacian} again implies that $\Delta(\emptyset,w)$ is a conformally covariant operator. 
Note that $M(\emptyset,w)$ is the unsigned weighted vertex-edge incidence matrix, and it is the weighted analogue of the matrix $B$ from \cref{sec:moduli}. 
Indeed, they have the same rank, as one is obtained from the other by scaling the columns. It follows from \cref{thm:bipartite-component} that 

\begin{proposition}\label{prop:independent2}
Given a weighted graph $(G,w)$, $\dim \emph{ker}(\Delta(\emptyset,w)) = |E|-|V|+\omega_0$, where $\omega_0$ denotes the number of bipartite components of $G$. 
In particular, $\dim \emph{ker}(\Delta(\emptyset,w))$ is independent of $w$.
\end{proposition}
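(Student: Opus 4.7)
The plan is to reduce the computation of $\dim\ker(\Delta(\emptyset,w))$ to the rank of the unsigned incidence matrix $B$ from \cref{sec:moduli}, and then invoke \cref{thm:bipartite-component}.

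First, I would observe the standard linear-algebra fact that for any real matrix $M$,
\[
\ker(M^T M) = \ker(M),
\]
since $M^T M x = 0$ implies $x^T M^T M x = \|Mx\|^2 = 0$ and hence $Mx = 0$. Applying this to $M = M(\emptyset, w)$ and using the definition $\Delta(\emptyset, w) = M(\emptyset, w)^T M(\emptyset, w)$ from \eqref{def:signed:edge:Laplacian}, I get $\ker(\Delta(\emptyset,w)) = \ker(M(\emptyset,w))$.

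Second, I would compare $M(\emptyset, w)$ to the (unweighted) unsigned incidence matrix $B$ defined in \eqref{eqn:unsigned}. Inspecting \eqref{signed:incidence} in the case $F = \emptyset$, the $j$-th column of $M(\emptyset, w)$ is exactly $\sqrt{w(e_j)}$ times the $j$-th column of $B$. Since $w(e_j) > 0$ for every edge, this amounts to right-multiplication of $B$ by an invertible diagonal matrix $\operatorname{diag}(\sqrt{w(e_1)},\ldots,\sqrt{w(e_m)})$, which preserves the rank. Hence $\operatorname{rank}(M(\emptyset, w)) = \operatorname{rank}(B)$.

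Finally, by the rank-nullity theorem applied to the $|V|\times|E|$ matrix $M(\emptyset, w)$,
\[
\dim\ker(M(\emptyset, w)) = |E| - \operatorname{rank}(M(\emptyset, w)) = |E| - \operatorname{rank}(B),
\]
and \cref{thm:bipartite-component} gives $\operatorname{rank}(B) = |V| - \omega_0$. Combining these yields $\dim\ker(\Delta(\emptyset, w)) = |E| - |V| + \omega_0$, which depends only on the combinatorics of $G$ and not on $w$. There is no real obstacle here; the entire argument is a bookkeeping exercise once the identifications $\ker(M^T M) = \ker(M)$ and the column-scaling relation between $M(\emptyset,w)$ and $B$ are made explicit.
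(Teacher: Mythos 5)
Your proof is correct and follows essentially the same route as the paper: the paper likewise identifies $M(\emptyset,w)$ as a column-scaled copy of $B$ (hence of equal rank) and invokes \cref{thm:bipartite-component}, with the identification $\ker(\Delta(\emptyset,w)) = \ker(M(\emptyset,w))$ used implicitly just as in the proof of \cref{prop:independent1}. Your write-up merely makes explicit the standard facts ($\ker(M^T M)=\ker(M)$ and rank--nullity) that the paper leaves tacit.
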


\subsection{Generalized incidence matrices: left and right kernels} 
A generalized incidence matrix $M(F,w)$ will in general be a rectangular matrix; accordingly, 
we shall consider separately the {\em left kernel} of $M$: $\ker_{left} M=\{U:U\cdot M=0\}$;  
and the {\em right kernel} of $M$: $\ker_{right} M=\{V:M\cdot V=0\}$.  

In this setting, we have the `rectangular' analogue of \cref{thm:kernel}: this Proposition below follows easily from \eqref{incidence:transform}.  
\begin{proposition}\label{left:right:kernel}
Let $G$ be a simple connected graph, $F\subset E(G)$; let also $w\in\cW$ and $\tw\in[w]$.  Then 
$\ker_{left}M(F,\tw)=\ker_{left}M(F,w)$; also, $\ker_{right}M(F,\tw)=D_u^{-1}\cdot \ker_{right}M(F,w)$
\end{proposition}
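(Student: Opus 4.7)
The plan is to derive both kernel statements directly from the transformation law \eqref{incidence:transform}, which asserts $M(F,\tw) = M(F,w) D_u$ for an invertible diagonal matrix $D_u$. Since this identity expresses $M(F,\tw)$ as a post-multiplication of $M(F,w)$ by an invertible operator, the left kernel is unaffected while the right kernel is pulled back by $D_u^{-1}$; the proof is a matter of carrying out this observation carefully on both sides.

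First I would handle the left kernel. Take $U \in \R^{|V|}$ (viewed as a row vector) and compute
\begin{equation*}
U \cdot M(F,\tw) = U \cdot M(F,w) \cdot D_u.
\end{equation*}
Because $D_u$ is invertible, the right-hand side vanishes if and only if $U \cdot M(F,w) = 0$. This gives the equality $\ker_{left} M(F,\tw) = \ker_{left} M(F,w)$ without any further work.

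For the right kernel, take $V \in \R^{|E|}$ and note that
\begin{equation*}
M(F,\tw) \cdot V = M(F,w) \cdot D_u \cdot V,
\end{equation*}
so $V \in \ker_{right} M(F,\tw)$ if and only if $D_u V \in \ker_{right} M(F,w)$, that is, if and only if $V \in D_u^{-1} \cdot \ker_{right} M(F,w)$. This yields the second claim.

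There is no real obstacle here; the statement is essentially a reformulation of \eqref{incidence:transform} using the invertibility of $D_u$, which was established in \eqref{eqn:diagonal-matrix} (all diagonal entries are of the form $e^{(\cdots)/2}$ and hence positive). The only thing to be mindful of is the conventions: $U$ acts on the left as a row vector of length $|V|$ and $V$ acts on the right as a column vector of length $|E|$, matching the $|V| \times |E|$ shape of $M(F,w)$.
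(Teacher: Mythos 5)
Your proof is correct and is precisely the argument the paper intends: the authors state that the proposition ``follows easily from \eqref{incidence:transform}'', and your careful unwinding of $M(F,\tw)=M(F,w)D_u$ with $D_u$ invertible is exactly that argument, made explicit on both sides. Nothing further is needed.
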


\subsection{Additional variants of the edge Laplacian} 

We first prove the following elementary lemma: 
\begin{lemma}\label{omit:row}
Let the graph $G$ have $m$ edges.  
Let $J\subset\{1,2,\ldots,m\}$.  Denote by $M_J(F,w)$ the signed incidence matrix defined by 
\eqref{signed:incidence} with the {\em columns} indexed by $J$ omitted; we ignore the {\em edges}  
labelled by the elements of $J$.  Then the operator 
\begin{equation}\label{edge:laplacian_omit}
\Delta_J(F,w):=M_J(F,w)^t\cdot M_J(F,w)
\end{equation}
is also conformally covariant.  
\end{lemma}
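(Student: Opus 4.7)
The plan is to argue that column deletion commutes with the conformal transformation on incidence matrices, and then inherit the covariance of $\Delta_J(F,w)$ from that of $\Delta(F,w)$ in Theorem \ref{thm:signed:edge:Laplacian}. The key point is that in \eqref{incidence:transform} the diagonal matrix $D_u$ is indexed by the \emph{edges}, so it acts on $M(F,w)$ by column-wise scaling; removing columns from $M(F,w)$ therefore corresponds to simultaneously removing the matching rows and columns from $D_u$.

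More precisely, I would first fix $\tilde{w} \in [w]$ with conformal factor $u$ and write, by \eqref{incidence:transform},
\[
M(F,\tilde{w}) = M(F,w)\, D_u,
\]
where $D_u$ is the diagonal matrix in \eqref{eqn:diagonal-matrix}. Let $(D_u)_J$ denote the diagonal submatrix of $D_u$ obtained by deleting the rows and columns indexed by $J$. Since deleting a set of columns from a product $AD$ with $D$ diagonal is the same as deleting the corresponding columns of $A$ and the corresponding rows/columns of $D$, I would conclude
\[
M_J(F,\tilde{w}) = M_J(F,w)\, (D_u)_J.
\]

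Then I would compute directly:
\[
\Delta_J(F,\tilde{w}) = M_J(F,\tilde{w})^t M_J(F,\tilde{w}) = (D_u)_J\, M_J(F,w)^t M_J(F,w)\, (D_u)_J = (D_u)_J\, \Delta_J(F,w)\, (D_u)_J,
\]
using that $(D_u)_J$ is symmetric. Since $(D_u)_J$ is an invertible diagonal matrix with positive entries depending only on the conformal factor $u$, this is exactly the transformation law \eqref{transform:graphs} with $D_\alpha = D_\beta = (D_u)_J$, showing conformal covariance of $\Delta_J(F,w)$.

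There is no serious obstacle here; the only subtlety worth spelling out is the interchange of column deletion with right-multiplication by a diagonal matrix, which is immediate once one observes that $D_u$ is indexed by edges rather than by vertices. Combinatorially, this reflects the intuitive fact that deleting the edges in $J$ from $G$ yields a subgraph on the same vertex set, and the conformal factor $u \in \operatorname{Hom}(V,\R)$ restricts naturally to this subgraph, so conformal covariance of the edge Laplacian on the ambient graph passes to the truncated operator.
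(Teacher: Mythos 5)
Your proof is correct and follows essentially the same route as the paper's: both arguments reduce to the observation that the edge-indexed diagonal matrix $D_u$ commutes with deletion of the indices in $J$, yielding $\Delta_J(F,\tilde{w}) = D_J(u)\,\Delta_J(F,w)\,D_J(u)$. The only cosmetic difference is that you verify the transformation law at the level of the truncated incidence matrix $M_J(F,w)$ and then square, whereas the paper notes directly that $\Delta_J(F,w)$ is the principal minor of $\Delta(F,w)$ and that conjugation by a diagonal matrix restricts to principal minors; these are the same computation.
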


\begin{proof}
We showed in Theorem \ref{thm:signed:edge:Laplacian} that $\Delta(F,w):=M(F,w)^t\cdot M(F,w)$ 
is conformally covariant.  The new matrix $M_J(F,w)^t\cdot M_J(F,w)$ (of order 
$(m-|J|)\cdot(m-|J|)$) is the minor $\Delta(F,w)$, with rows and columns indexed by $J$ omitted.   
Let $D_J(u)$ be the diagonal matrix that appears in Theorem \ref{thm:signed:edge:Laplacian} 
with entries corresponding to the edges labelled by $J$ omitted.  Then it follows easily from 
the definition that 
\begin{equation}\label{conf:covariant:omitJ}
\Delta_J(F,\tw)=D_J(u)\cdot \Delta_J(F,w)\cdot D_J(u), 
\end{equation}
finishing the proof. 
\end{proof}

Let $I = \{ 1,\ldots, |E|-|J|\}$, then for each pair $(i_1,i_2) \in I \times I$, let $\Lambda_{i_1,i_2}(J,F,w)$ be the matrix obtained from $\Delta_J(F,w)$ by removing the $i_1$st row and $i_2$nd column. Then,

\begin{proposition}
For each $(i_1,i_2) \in I \times I$, $\Lambda_{i_1,i_2}(J,F,w)$ is conformally covariant.
\end{proposition}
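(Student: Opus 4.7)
The plan is to leverage Lemma \ref{omit:row} directly and observe how deletion of a row and column interacts with conjugation by a diagonal matrix. From \eqref{conf:covariant:omitJ} we already know that
$$
\Delta_J(F,\tw) = D_J(u)\cdot \Delta_J(F,w)\cdot D_J(u),
$$
so entry-wise we have $[\Delta_J(F,\tw)]_{ij} = d_i\, d_j\, [\Delta_J(F,w)]_{ij}$, where $d_1,\dots,d_{|E|-|J|}$ are the positive diagonal entries of $D_J(u)$ (depending only on the conformal factor $u$).

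First I would introduce $D_\alpha$ as the diagonal matrix obtained from $D_J(u)$ by deleting the $i_1$st row and column, and $D_\beta$ as the diagonal matrix obtained from $D_J(u)$ by deleting the $i_2$nd row and column. Both are invertible diagonal matrices with positive entries whose values depend only on $u$. Next I would check that the operations of deleting the $i_1$st row and the $i_2$nd column commute with the diagonal scaling: deleting the $i_1$st row of $D_J(u)\,\Delta_J(F,w)\,D_J(u)$ simply removes the $i_1$st factor $d_{i_1}$ from the left action, and deleting the $i_2$nd column removes $d_{i_2}$ from the right action. The remaining rows (indexed by $i\neq i_1$) and columns (indexed by $j\neq i_2$) are still scaled exactly by $d_i$ on the left and $d_j$ on the right. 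This yields
$$
\Lambda_{i_1,i_2}(J,F,\tw) = D_\alpha \cdot \Lambda_{i_1,i_2}(J,F,w) \cdot D_\beta,
$$
which is the transformation law \eqref{transform:graphs}.

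Finally I would note that although $D_\alpha$ and $D_\beta$ may have different dimensions when $i_1 \ne i_2$ (they are obtained by removing different diagonal entries of $D_J(u)$), this is explicitly permitted by Definition \ref{def:operator}, where the two flanking diagonal matrices need not have equal size. There is no real obstacle here; the only subtle point is the bookkeeping that confirms the scaling factors $d_i d_j$ on the surviving entries match the entries of $D_\alpha$ and $D_\beta$ after re-indexing, which is immediate once one writes out a single entry of the product.
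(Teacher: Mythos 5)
Your proof is correct and takes essentially the same route as the paper: the paper's proof likewise invokes Lemma~\ref{omit:row} and flanks $\Lambda_{i_1,i_2}(J,F,w)$ by the matrices $D_J^{i_1}(u)$ and $D_J^{i_2}(u)$ obtained from $D_J(u)$ by deleting the $i_1$st (resp.\ $i_2$nd) row and column, which are precisely your $D_\alpha$ and $D_\beta$. One small slip in your closing remark: when $i_1 \neq i_2$ the two matrices differ in their \emph{entries} but not in their dimensions --- both are $(|E|-|J|-1)\times(|E|-|J|-1)$, as they must be for the product to be defined --- though this does not affect the argument.
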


\begin{proof}
Let $D_J^{i_1}(u)$ be the matrix obtained from $D_J(u)$ by removing the $i_1$st row and $i_1$st column, and let $D_J^{i_2}(u)$ be the matrix obtained from $D_J(u)$ by removing the $i_2$nd row and $i_2$nd column. Then, it follows from \cref{omit:row} that
\[
\Lambda_{i_1,i_2}(J,F,\tilde{w}) = D_J^{i_1}(u) \cdot \Lambda_{i_1,i_2}(J,F,w) \cdot D_J^{i_2}(u).
\]
\end{proof}

The same procedure can be applied for any square subset of $I \times I$ in order to get further conformally covariant operators.

%%%%%%%%%%%

\section{Conformal invariants from Schr\"{o}dinger operators}\label{sec:inv}
Let $(G,w)$ be a finite simple weighted graph and let 
$H:V(G)\to\reals$ be a function on vertices.  
\begin{definition}\label{def:nodal:set:domain}
The {\em nodal set} $\cN(H)$ is the set of all edges $e=(v_1,v_2)\in E(G)$ such that 
$H(v_1)H(v_2)<0$, i.e. such that $H$ changes sign across $e$; together with the set of all vertices 
$v$ such that $H(v)=0$.  A {\em strong nodal domain} $U$ of $H$ is a connected subgraph of 
$G$ such that $H$ has constant sign on all the vertices of $U$.  
\end{definition}

Now, let $S$ be a conformally covariant operator (satisfying \ref{def:operator}).  Let $w\in\cW$, 
and let $\tw\in[w]$.  Then it follows from \eqref{transform:graphs} that there exists a canonical isomorphism
\begin{equation}\label{kernel:transform}
\varphi_{w, \tilde{w}} : \ker S_{\tilde{w}} \stackrel{\simeq}{\longrightarrow} \ker S_w
\end{equation}
with matrix representation  $\varphi_{w, \tilde{w}}(x) = D_\beta^{-1}x$.  

Since the entries of $D_\beta$ are all positive, the following Proposition is immediate: 
\begin{proposition}\label{nodal:set:domain:invariant}
Assume $S_w$ is conformally covariant, and $\ker S_w\neq 0$. 
Let $H\in\ker S_w$.  Then the nodal set $\cN(H)$ and strong nodal domains of $H$ 
are invariant under the isomorphism \eqref{kernel:transform}.   
\end{proposition}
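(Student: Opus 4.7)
The plan is to exploit the positivity of the diagonal matrix $D_\beta$ built into \cref{def:operator}: since $D_\beta$ has strictly positive diagonal entries, so does $D_\beta^{-1}$, and hence the isomorphism $\varphi_{w,\tilde{w}}$ rescales each vertex coordinate by a positive scalar and therefore preserves its sign vertex-by-vertex. Concretely, I would fix $\tilde{H}\in\ker S_{\tilde{w}}$ and set $H := \varphi_{w,\tilde{w}}(\tilde{H}) = D_\beta^{-1}\tilde{H}\in\ker S_w$. Then for each vertex $v_i$,
\[
H(v_i) = (D_\beta^{-1})_{ii}\,\tilde{H}(v_i),
\]
with $(D_\beta^{-1})_{ii}>0$, so $\operatorname{sgn}(H(v_i))=\operatorname{sgn}(\tilde{H}(v_i))$ for every $i$.

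This vertex-wise sign agreement yields both assertions immediately. By \cref{def:nodal:set:domain}, the nodal set $\cN(H)$ is determined by two conditions: which vertices satisfy $H(v)=0$, and which edges $(v_i,v_j)$ satisfy $H(v_i)H(v_j)<0$. Both conditions transfer verbatim between $H$ and $\tilde{H}$, so $\cN(H)=\cN(\tilde{H})$. Likewise, a strong nodal domain is defined as a connected subgraph on which the function has constant sign, and this depends only on the sign pattern at the vertices; hence the collections of strong nodal domains of $H$ and $\tilde{H}$ coincide.

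There is essentially no obstacle here; the proposition is a direct consequence of the positivity requirement in \cref{def:operator} combined with the kernel identification \eqref{kernel:transform}. The only point worth spelling out is that the argument implicitly treats $S_w$ as acting on $\textrm{Hom}(V,\R)$, so that \cref{def:nodal:set:domain} applies directly; analogous invariance statements for conformally covariant operators on $\textrm{Hom}(E,\R)$ would require one first to formulate an edge-based notion of nodal set and nodal domain.
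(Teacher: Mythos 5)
Your proof is correct and takes essentially the same approach as the paper: the paper states the proposition as ``immediate'' from the positivity of the entries of $D_\beta$, which is precisely the vertex-wise sign-preservation argument you make explicit. Your closing caveat about the vertex versus edge setting is also consistent with the paper, which notes separately that edge-based analogues require defining nodal sets for functions in $\textrm{Hom}(E,\R)$.
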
 

It also follows follows easily from \eqref{kernel:transform} that
\begin{proposition}\label{cor:nodal_intersection}
If $\dim \textrm{ker}(S_w) \geq 2$, then the nonempty intersection of nodal sets of 
$H_1,H_2 \in \textrm{ker}(S_w)$ and of their complements are invariant under the isomorphism \eqref{kernel:transform}. 
\end{proposition}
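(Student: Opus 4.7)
The plan is to derive this as an immediate consequence of \cref{nodal:set:domain:invariant}, exploiting the fact that the isomorphism $\varphi_{w,\tilde w}(x)=D_\beta^{-1}x$ acts by a diagonal matrix with strictly positive entries. Such a scaling is sign-preserving vertex by vertex, so it preserves all the combinatorial data (zero set, sign-change edges) that defines a nodal set. The whole argument should therefore be a one-line reduction followed by a set-theoretic observation.

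Concretely, I would first fix $H_1,H_2\in\ker S_w$ and set $\tilde H_i:=\varphi_{w,\tilde w}^{-1}(H_i)=D_\beta H_i\in\ker S_{\tilde w}$. Since $[D_\beta]_{vv}>0$ for every $v\in V$, the values $H_i(v)$ and $\tilde H_i(v)$ have identical sign (including both being zero simultaneously). Hence an edge $e=(v_1,v_2)$ satisfies $H_i(v_1)H_i(v_2)<0$ if and only if $\tilde H_i(v_1)\tilde H_i(v_2)<0$, and $H_i(v)=0$ if and only if $\tilde H_i(v)=0$. This is precisely the content of \cref{nodal:set:domain:invariant}, and it gives
\[
\cN(H_i)=\cN(\tilde H_i)\qquad\text{as subsets of }V(G)\cup E(G),\ i=1,2.
\]

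With the two individual equalities of sets in hand, the intersection statements follow formally. Intersection and complement commute with equality of subsets of the fixed ambient set $V(G)\cup E(G)$, so
\[
\cN(H_1)\cap\cN(H_2)=\cN(\tilde H_1)\cap\cN(\tilde H_2),\qquad \cN(H_1)^c\cap\cN(H_2)^c=\cN(\tilde H_1)^c\cap\cN(\tilde H_2)^c,
\]
and similarly for the mixed intersections $\cN(H_1)\cap\cN(H_2)^c$ and $\cN(H_1)^c\cap\cN(H_2)$. The hypothesis $\dim\ker(S_w)\geq 2$ is only there to guarantee that one can actually choose two linearly independent $H_1,H_2$ so that the intersections are meaningful; the nonemptiness hypothesis merely ensures that the invariant set under discussion is a genuine subgraph-level object rather than empty.

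There is no serious obstacle: the entire argument hinges on the positivity of the diagonal entries of $D_\beta$, which is built into \cref{def:operator}. The only point requiring mild care is bookkeeping the ambient space on which $S_w$ acts: if $S_w$ acts on $\mathrm{Hom}(V,\R)$ the above is exactly the nodal-set notion of \cref{def:nodal:set:domain}, while for operators on $\mathrm{Hom}(E,\R)$ the same reasoning applies with the analogous edge-based notion of nodal set, and the diagonal-positivity argument transfers verbatim.
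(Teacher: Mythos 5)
Your argument is correct and is essentially the paper's own: the paper offers no proof beyond the remark that the statement ``follows easily from \eqref{kernel:transform}'', and the content of that remark is precisely your observation that $\varphi_{w,\tilde w}$ acts by a diagonal matrix with strictly positive entries, hence preserves signs vertexwise (or edgewise), hence preserves each nodal set, after which the claims about intersections and complements are pure set theory. One bookkeeping note: since $\ker S_{\tilde w}=D_\beta^{-1}\ker S_w$, the element of $\ker S_{\tilde w}$ corresponding to $H_i$ is $D_\beta^{-1}H_i$ rather than your $D_\beta H_i$ (a mismatch inherited from the paper's own inconsistent labeling of the direction of \eqref{kernel:transform}), but this is immaterial, as both $D_\beta$ and $D_\beta^{-1}$ are positive diagonal and the sign-preservation argument is unchanged.
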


One can define nodal sets and strong nodal domains for functions in $\textrm{Hom}(E,\R)$, and 
prove analogues of Propositions \ref{nodal:set:domain:invariant} and \ref{cor:nodal_intersection} 
for conformally covariant operators on $\textrm{Hom}(E,\R)$.  

We can say more in the special case $S_w=A(F,w)$.    
\begin{theorem}\label{thm:edges:map} 
Let $A(F,w)$ be a generalized adjacency matrix as in \eqref{eqn:generalized_adjacency}.  
Assume that $\ker A(F,w)\neq 0$, and let $H\in\ker A(F,w)$.  Consider the map $\Psi_{H}:E(G)\to\reals$ 
defined for an edge $e=(v_1,v_2)$ by 
\begin{equation}\label{def:map:adj:kernel}
\Psi_{H}(e):=H(v_1)H(v_2)w(e)
\end{equation}
Then, $\Psi_{H}$ is invariant under the isomorphism \eqref{kernel:transform}.  
\end{theorem}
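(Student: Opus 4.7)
The plan is a short direct computation, relying on the fact that both diagonal matrices appearing in the conformal transformation of $A(F,w)$ are equal and encode exactly the vertex conformal factor.

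First I would unpack the conformal transformation of the generalized adjacency matrix proved earlier in the paper: if $\tilde{w}(v_i,v_j) = e^{u(v_i)+u(v_j)} w(v_i,v_j)$, then
\[
A(F,\tilde{w}) = D_u\, A(F,w)\, D_u, \qquad (D_u)_{ii}=e^{u(v_i)}.
\]
So in the notation of \cref{def:operator} we have $D_{\alpha}=D_{\beta}=D_u$. Then I would write the isomorphism \eqref{kernel:transform} explicitly in this case: if $H\in\ker A(F,w)$, the corresponding element $\tilde H\in\ker A(F,\tilde w)$ is $\tilde H = D_u^{-1} H$, i.e. $\tilde H(v) = e^{-u(v)}H(v)$ for every $v\in V$.

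With these two ingredients in hand, the proof is just a one-line cancellation. For an edge $e=(v_1,v_2)$,
\[
\Psi_{\tilde H}(e) = \tilde H(v_1)\,\tilde H(v_2)\,\tilde w(e)
= e^{-u(v_1)}H(v_1)\cdot e^{-u(v_2)}H(v_2)\cdot e^{u(v_1)+u(v_2)} w(e)
= H(v_1)H(v_2)w(e) = \Psi_H(e),
\]
so $\Psi_H$ is invariant under \eqref{kernel:transform}. The sign of $\Psi_H(e)$ — which is what governs whether $e$ lies in the nodal set $\cN(H)$ — is therefore also a conformal invariant, consistent with \cref{nodal:set:domain:invariant}.

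There is no real obstacle: the entire content is that the vertex conformal factors cancel against the edge conformal factor in $\tilde w(e) = e^{u(v_1)+u(v_2)}w(e)$. The only point that requires a bit of care is making sure the direction of the isomorphism \eqref{kernel:transform} is implemented correctly (so that the exponents subtract rather than add); once $\tilde H(v) = e^{-u(v)}H(v)$ is correctly identified, the identity is immediate.
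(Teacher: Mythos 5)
Your proof is correct and is essentially the same as the paper's: both use $A(F,\tilde w)=D_uA(F,w)D_u$ with $(D_u)_{ii}=e^{u(v_i)}$, identify the image of $H$ under \eqref{kernel:transform} as $\tilde H(v)=e^{-u(v)}H(v)$, and cancel the vertex factors against $\tilde w(e)=e^{u(v_1)+u(v_2)}w(e)$. Your version is in fact slightly more careful than the paper's, since you make the direction of the isomorphism explicit (a point the referee also flagged in the published proof).
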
 

\begin{proof}
Let $\tilde{w}\in[w]$ with conformal factor $u$.  Consider $\tilde{H} = \varphi_{w,\tilde{w}}(H)$ where \\
$\tilde{H}(v_1)=e^{-u(v_1)}H(v_1)$, and $\tilde{H}(v_2)=e^{-u(v_2)}H(v_2)$ then $\tilde{H}\in \ker A(F,\tilde{w})$. By definition $\tilde{w}(e)=e^{u(v_1)+u(v_2)}w(e)$ the desired result follows.

\end{proof}

\begin{example}
Let $G_{5,2}$ be as in \cref{ex:hypersurface1}, then along the discriminant hypersurface $\mathcal{D}(\emptyset)_{\cM}$ of the standard adjacency matrix, we have that $A(\emptyset,w)$ has a simple zero eigenvalue. 
Identifying the canonical representative of a conformal class with a pair $(a,b) \in (0,\infty)^2$, we get a `canonical' basis vector $H_{(a,b)} \in \textrm{ker} (A(\emptyset, (a,b)))$, which is given by
\[
H_{(a,b)} = \left( \frac{a^2}{b^2}, a^5-\frac{a}{b}, -a^2, -1,1 \right). 
\]
For fixed $e \in E(G_{5,2})$, we consider the range of $\psi_{H_{(a,b)}}(e)$ as we vary the conformal class along the discriminant hypersurface. Namely, consider the set
\[
X_e = \{ (a,b, \psi_{H_{(a,b)}}(e) \in (\R_+)^3 \colon (a,b) \in \mathcal{D}(\emptyset)_{\cM} \}.
\]
Projections of some $X_e$'s are pictured in \cref{fig:image_map}.
\begin{figure}[h]
    \centering
    \includegraphics[scale=0.5]{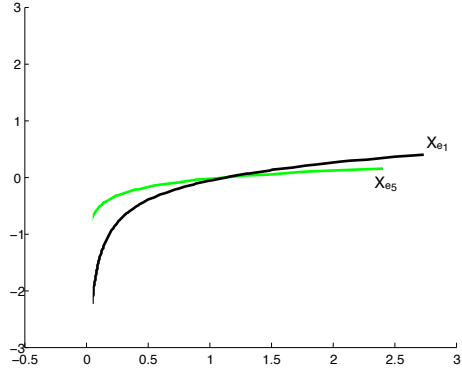}
    \caption{The horizontal axis corresponds to points $a + b$ for $(a,b) \in \mathcal{D}(\emptyset)_{\cM}$ and the curves give the value $\psi_{H_{(a,b)}}(e_1)$ and $\psi_{H_{(a,b)}}(e_5)$ respectively. In this sense, the two curves are projections of $X_{e_1}$ and $X_{e_5}$ respectively onto the plane.}
    \label{fig:image_map}
\end{figure}
\end{example}

\begin{remark}
As a consequence of \cref{prop:independent1} and \cref{prop:independent2}, the above statements are vacuous for the edge Laplacian $\Delta(E,w)$ and $\Delta(\emptyset,w)$, as the dimensions of their kernels are independent of the weight $w$.
Thus, the dimension of the cycle subspace $\textrm{ker}(\Delta(E,w))$ is also independent of $w$.
\end{remark}

Returning to the general case where $S_w$ is an arbitrary differential operator on domain $Y = V$ or $E$ with arbitrary elements $y \in Y$.
Let $f_1,\ldots,f_m$ be a basis of $\textrm{ker}(S_w)$ and $X_w = \cap_{i=1}^m f_i^{-1}(0)$.
\footnote{Note that $X_w$ is not dependent on the choice of basis.}
Define the map $\Phi_w \colon Y\backslash X_w \to \mathbb{RP}^{m-1}$
\begin{equation}
\Phi_w(y) = (f_1 : \ldots : f_m)(y) \hspace{3mm} \forall y \in Y\backslash X_w.
\end{equation}
Take $\tilde{w} \in [w]$, and let $D_{\beta}$ be the invertible diagonal matrix as in 
\ref{transform:graphs}.
Then, $D_{\beta}^{-1} f_1, \ldots, D_{\beta}^{-1} f_m$ is a basis of $\textrm{ker}(S_{\tilde{w}})$ and 
\cref{cor:nodal_intersection} implies that $X_{\tilde{w}} = X_w$; in particular, the domain of $\Phi_{\tilde{w}}$ is equal to the domain of $\Phi_{w}$. Finally, for $y \in Y\backslash X_{\tilde{w}}$,
$$
\Phi_{\tilde{w}}(y) = (D_{\beta}^{-1}(y)f_1(y) : \ldots : D_{\beta}^{-1}(y) f_m(y)) 
=(f_1(y): \ldots : f_m(y)) = \Phi_w(y),
$$
as the diagonal entries $D_{\beta}^{-1}(y)$ are nonzero. It immediately follows that

\begin{proposition}
For fixed $S_w$, the map $\Phi_w$ is a conformal invariant.
\end{proposition}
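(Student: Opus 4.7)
The plan is to make precise what ``conformally invariant'' means for the map $\Phi_w$ and then reduce the statement to the observation that $D_\beta^{-1}$ is diagonal with positive entries. Fix $w \in \cW$ and a basis $f_1,\ldots,f_m$ of $\ker S_w$, and let $\tilde{w} \in [w]$. By the conformal covariance \eqref{transform:graphs}, together with the isomorphism \eqref{kernel:transform}, the vectors $\tilde{f}_i := D_\beta^{-1} f_i$ form a basis of $\ker S_{\tilde{w}}$. This is the canonical basis identification on the $\tilde{w}$-side, and it is with this identification that the statement of the proposition should be read.

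The next step is to verify that the indeterminacy loci coincide, $X_{\tilde{w}} = X_w$. Since $D_\beta$ is diagonal with strictly positive entries, the scalar $D_\beta^{-1}(y)$ is nonzero for every $y$, hence $\tilde{f}_i(y) = D_\beta^{-1}(y) f_i(y)$ vanishes at $y$ if and only if $f_i(y)$ does, so
\[
X_{\tilde{w}} = \bigcap_{i=1}^m \tilde{f}_i^{-1}(0) = \bigcap_{i=1}^m f_i^{-1}(0) = X_w,
\]
and hence $\Phi_w$ and $\Phi_{\tilde{w}}$ share the common domain $Y \setminus X_w$. Then for $y \in Y \setminus X_w$ the homogeneous coordinates of $\Phi_{\tilde{w}}(y)$ differ from those of $\Phi_w(y)$ by the common nonzero factor $D_\beta^{-1}(y)$, so
\[
\Phi_{\tilde{w}}(y) = \bigl(D_\beta^{-1}(y) f_1(y) : \cdots : D_\beta^{-1}(y) f_m(y)\bigr) = \bigl(f_1(y) : \cdots : f_m(y)\bigr) = \Phi_w(y),
\]
where the middle equality uses only that projective points are defined up to a global nonzero rescaling.

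I do not expect a serious obstacle here. The entire content of the proposition is that conjugation by a \emph{diagonal} conformal factor rescales every kernel basis vector at a single point $y$ by the \emph{same} scalar $D_\beta^{-1}(y)$, which is precisely the redundancy that is killed on passing to $\mathbb{RP}^{m-1}$. The only point worth flagging carefully is the footnote claim that $X_w$ is independent of the choice of basis: if $g_j = \sum_i A_{ji} f_i$ is another basis of $\ker S_w$, then invertibility of $A$ gives $\bigcap_j g_j^{-1}(0) = \bigcap_i f_i^{-1}(0)$, so $X_w$ is intrinsically the common zero set of the entire kernel and the proof above is independent of the basis chosen on the $w$-side.
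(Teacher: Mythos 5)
Your proof is correct and follows essentially the same route as the paper: identify $D_\beta^{-1}f_1,\ldots,D_\beta^{-1}f_m$ as a basis of $\ker S_{\tilde{w}}$, observe $X_{\tilde{w}}=X_w$, and cancel the common nonzero factor $D_\beta^{-1}(y)$ in the homogeneous coordinates. The only cosmetic differences are that you derive $X_{\tilde{w}}=X_w$ directly from the positivity of the diagonal entries (the paper cites its Proposition \ref{cor:nodal_intersection} for this, which amounts to the same computation) and that you spell out the basis-independence of $X_w$, which the paper relegates to a footnote.
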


\begin{remark}
If we consider elliptic operators on Riemannian manifolds, it is well-known that the Laplacian is conformally 
covariant only in dimension two.  However, if we add a multiple of the scalar curvature as a potential, we get a {\em conformal 
Laplacian}, which is a very well-known conformally covariant operator.  Accordingly, it seems interesting 
to classify conformally covariant {\em Schr\"{o}dinger} operators (in the sense of \cite{ycdv}).
The results of this section would hold for such operators.
The authors have partial results on this classification, and plan to consider this problem further in a future paper.
In particular, if $S_w = \Delta_w + P_w$ is an elliptic Schr\"{o}dinger operator, where $\Delta_w$ is the vertex Laplacian and $P_w$ is any diagonal matrix then, under the assumption that $S_w$ is conformally covariant, the potential $P_w$ transforms as
\begin{equation}
[P_{\tilde{w}}]_{ii} + \sum_{v_j \sim v_i} \tilde{w}(v_i,v_j) = e^{2u(v_i)} \left( [P_w]_{ii} + \sum_{v_j \sim v_i} w(v_i,v_j)\right),
\end{equation}
where $\tilde{w} \in [w]$ is related to $w$ by the conformal factor $u \in \textrm{Hom}(V,\R)$.
\end{remark}

%%%%%%%%%%

\section{{\em Determinants} and {\em Permanents} of conformally covariant operators} 

There exist polynomial graph invariants that can be constructed as determinants of certain operators 
on graphs.  For example, consider the {\em tree polynomial} 
\begin{equation}\label{tree:polynomial} 
\cT(G,w):=\sum_T\prod_{e\in T} w(e),   
\end{equation}
where the sum is taken over all spanning trees of $G$.  

Let $F=E$ (the edge set of $G$), and let $M_i(E,w)$ be the matrix defined by the formula 
\eqref{signed:incidence} with the $i$-th row (corresponding to the vertex $v_i$ of $G$) omitted.  
It can be shown (\cite[p. 135]{Kung}) that $\cT(G,w)=\det(M_i(E,w)\cdot M_i(E,w)^t)$.  Omitting 
several rows in $M(E,w)$ gives the generating polynomial for {\em rooted spanning forests 
of $G$}, see e.g. \cite[(3)]{CGS}. These results motivate the study below.

Let $A(F,w)$ be the generalized adjacency matrix of $\cref{eqn:generalized_adjacency}$.
We next define two multivariate polynomials, with the variables given by the edge weights.
\begin{definition}\label{poly:det:permanent}
Let $P_1(F,w):=\Perm A(F,w)$ and let $P_2(F,w):=\det A(F,w)$.  
\end{definition}
Recall that $\Perm X$ of a square matrix $X$ is defined by the same formula 
as $\det X$, except that the product corresponding to every permutation appears with the sign $+1$.  
By the multiplicativity of the determinant, it follows that 
$\det(D\cdot X\cdot D)=\det(D)^2\cdot \det(X)$.  
It follows easily from the definition of the permanent that for any diagonal matrix $D$, we have 
$\Perm (D\cdot X\cdot D)=\det(D)^2\cdot\Perm(X)$.  

More generally, let $\chi$ be any character of $S_{|V|}$, then by replacing
$\textrm{sgn}$ with $\chi$ in the determinant formula we get the immanant polynomials of the matrix $A(F,w)$, denoted $\textrm{Imma}_{\chi}(A(F,w))$. 
Define the following sequence of polynomials in $w = (w(e))_{e \in E}$:
\begin{equation}\label{eqn:general_polynomial}
P_{\chi}(F,w) := \textrm{Imma}_{\chi}(A(F,w)) = \sum_{\sigma \in S_{|V|}} \chi(\sigma) \prod_{i=1}^{|V|} [A(F,w)]_{i,\sigma(i)}.
\end{equation}
Take $\chi_1 = 1$ to be the trivial character and $\chi_2 = \textrm{sgn}$ to be the alternating character, then $P_{\chi_1}(F,w)$ and $P_{\chi_2}(F,w)$ correspond to the polynomials $P_1(F,w)$ and $P_2(F,w)$ respectively, as defined in \cref{poly:det:permanent} respectively.

In general, the immanant is not a multiplicative function; however, as $D(u)$ is a diagonal matrix, a simple calculation reveals that 
\[
\textrm{Imma}_{\chi_j}(D(u)\cdot A(F,w)\cdot D(u)) = \det(D(u))^2 \textrm{Imma}_{\chi_j}(A(F,w)).
\]
It follows that

\begin{theorem}\label{det:permanent} 
For each character $\chi$ of $S_{|V|}$, the polynomial $P_{\chi}(F,w)$ satisfies
$$
P_{\chi}(F,\tw)=\det(D(u))^2 P_{\chi}(F,w),
$$
where $D(u)$ is the invertible diagonal matrix such that $A(F,\tilde{w}) = D(u) A(F,w) D(u)$.
\end{theorem}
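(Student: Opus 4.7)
The plan is to reduce Theorem \ref{det:permanent} to a direct calculation using the conformal covariance $A(F,\tilde{w}) = D(u) A(F,w) D(u)$ established earlier for the generalized adjacency matrices, combined with the observation that any immanant picks up a factor of $\det(D)^2$ under two-sided multiplication by a diagonal matrix $D$. The claim for characters $\chi_1 = 1$ and $\chi_2 = \text{sgn}$ is already indicated in the excerpt (via the classical multiplicativity of $\det$ and the analogous identity for $\Perm$), so the content of the theorem is that the same scaling behaviour holds uniformly for \emph{every} character of $S_{|V|}$.

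First, I would substitute the conformal covariance relation directly into the defining sum \eqref{eqn:general_polynomial}. Writing $D(u) = \diag(e^{u(v_1)}, \ldots, e^{u(v_n)})$, we have
\begin{equation*}
[D(u)\, A(F,w)\, D(u)]_{i,\sigma(i)} = e^{u(v_i)}\, [A(F,w)]_{i,\sigma(i)}\, e^{u(v_{\sigma(i)})},
\end{equation*}
so that
\begin{equation*}
\prod_{i=1}^{|V|} [A(F,\tilde{w})]_{i,\sigma(i)} = \left(\prod_{i=1}^{|V|} e^{u(v_i)}\right)\left(\prod_{i=1}^{|V|} e^{u(v_{\sigma(i)})}\right)\prod_{i=1}^{|V|} [A(F,w)]_{i,\sigma(i)}.
\end{equation*}
The key step is then to reindex the second product along the permutation $\sigma$: since $i \mapsto \sigma(i)$ is a bijection of $\{1,\ldots,|V|\}$, we get $\prod_i e^{u(v_{\sigma(i)})} = \prod_j e^{u(v_j)} = \det(D(u))$. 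Hence both factors in front coincide with $\det(D(u))$ and are \emph{independent of $\sigma$}.

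With the scalar $\det(D(u))^2$ pulled out of the sum, what remains inside is exactly $\sum_{\sigma \in S_{|V|}} \chi(\sigma)\prod_i [A(F,w)]_{i,\sigma(i)} = P_\chi(F,w)$, yielding the desired transformation law. This also reproves the two cases in Definition \ref{poly:det:permanent} uniformly. There is no real obstacle here — the only subtle point is the bookkeeping in the reindexing $\prod_i e^{u(v_{\sigma(i)})} = \prod_j e^{u(v_j)}$, which is precisely what allows the factor to come out independent of $\sigma$ even when $\chi$ is neither the trivial nor the sign character (where no general multiplicativity is available for immanants). I would write the proof as a three-line computation culminating in this reindexing observation.
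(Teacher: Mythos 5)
Your proof is correct and is exactly the paper's argument: the paper simply asserts that ``a simple calculation reveals'' $\textrm{Imma}_{\chi}(D(u)\cdot A(F,w)\cdot D(u)) = \det(D(u))^2\,\textrm{Imma}_{\chi}(A(F,w))$, and your substitution-plus-reindexing computation (using that $\sigma$ is a bijection to extract $\det(D(u))^2$ independently of $\sigma$) is precisely that calculation spelled out. No gaps; your write-up just makes explicit what the paper leaves implicit.
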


To each polynomial $P_{\chi}(F,w)$, associate a vector $x(\chi,F,w) \in \mathbb{RP}^d$ for some $d < \infty$ where the components of $x(\chi,F,w)$ are the coefficients of the monomials and the coefficient of the constant. 
As $\det(D(u))^2$ is a strictly positive real number, it follows from \cref{det:permanent} that

\begin{corollary}\label{cor:projective_vector}
Fix a character $\chi$ of $S_{|V|}$ and a subset $F \subset E$. Then, the vector $x(\chi,F,w) \in \mathbb{RP}^d$ associated to $P_{\chi}(F,w)$ is a conformal invariant.
\end{corollary}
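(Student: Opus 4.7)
The plan is to deduce Corollary~\ref{cor:projective_vector} directly from Theorem~\ref{det:permanent}, using only the definition of $\mathbb{RP}^d$ as the quotient of $\mathbb{R}^{d+1} \setminus \{0\}$ by nonzero scalar multiplication.

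First I would observe that the scalar appearing in the transformation law, namely
\[
\det(D(u))^2 = \exp\!\Bigl(2\sum_{i=1}^{n} u(v_i)\Bigr),
\]
is strictly positive and in particular nonzero, since $D(u)$ is diagonal with strictly positive entries. Next I would unpack the construction of $x(\chi, F, w)$: it is the class in $\mathbb{RP}^d$ of the vector in $\mathbb{R}^{d+1}$ whose components are the coefficients of the monomials (and of the constant term) in the expansion of $P_\chi(F, w)$ in the edge-weight variables. Theorem~\ref{det:permanent} says that replacing $w$ by a conformally equivalent weight $\tilde{w} \in [w]$ multiplies the entire polynomial by the single scalar $\det(D(u))^2$; comparing the two sides monomial by monomial therefore shows that each component of the associated coefficient vector is rescaled by the same factor $\det(D(u))^2$.

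Since $\det(D(u))^2$ is a nonzero real number, the representative vectors of $x(\chi, F, \tilde{w})$ and $x(\chi, F, w)$ in $\mathbb{R}^{d+1}$ differ by a single nonzero global scalar, which by definition acts trivially on $\mathbb{RP}^d$. Hence $x(\chi, F, \tilde{w}) = x(\chi, F, w)$, and since $\tilde{w} \in [w]$ was arbitrary, $x(\chi, F, w)$ depends only on the conformal class $[w]$, which is the desired conformal invariance.

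There is essentially no obstacle here: the corollary is just Theorem~\ref{det:permanent} phrased in projective language. The only minor technical point one should flag is the implicit hypothesis that the coefficient vector of $P_\chi(F, w)$ is nonzero, so that $x(\chi, F, w)$ is a well-defined point of $\mathbb{RP}^d$; this amounts to the mild combinatorial condition that $P_\chi(F, \cdot)$ is not identically zero as a polynomial in $(w(e))_{e \in E}$, which is visible from the immanant expansion \eqref{eqn:general_polynomial}.
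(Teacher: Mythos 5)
Your proof is correct and is essentially the paper's own argument: the paper deduces the corollary in one line from \cref{det:permanent}, observing exactly as you do that $\det(D(u))^2$ is a strictly positive real scalar, so the coefficient vector is rescaled by a single nonzero factor and its class in $\mathbb{RP}^d$ is unchanged. Your closing caveat about well-definedness is a point the paper glosses over, though beware that calling the nonvanishing of $P_\chi(F,\cdot)$ a ``mild'' condition visible from \eqref{eqn:general_polynomial} is optimistic --- it genuinely fails in some cases, e.g.\ $P_{\chi_2}(F,w)=\det A(F,w)$ vanishes identically when $G$ is bipartite with $|V|$ odd by \cref{prop:bipartite:adjacency}, so it is a real hypothesis rather than an automatic consequence of the immanant expansion.
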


The polynomial $P_{\chi}(F,w)$ determines a subset $Z_{\chi}(F;w)$ of $(\reals^+)^m$ as follows: 
\begin{equation}\label{zero:equation}
Z_{\chi}(F;w):=\{w\in(\reals^+)^m:P_{\chi}(F;w)=0\}.  
\end{equation} 

As $D(u)$ is an invertible diagonal matrix, Theorem \ref{det:permanent} implies the following: 
\begin{corollary} For $\chi$ and $F$ as above, 
$$
Z_j(F;\tw)=Z_j(F;w).
$$
That is, the zero set $Z_j(F;w)$ is a conformal invariant.
\end{corollary}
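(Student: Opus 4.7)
The plan is to derive this corollary as an immediate consequence of Theorem \ref{det:permanent}, together with the observation that the conformal scaling factor is nowhere vanishing. The substantive content lies less in calculation than in setting up the interpretation of $Z_\chi(F;w)$ correctly; once that is done, the proof is a one-line observation.

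First, I will unpack the notation. The set $Z_\chi(F;w) = \{w \in (\R^+)^m : P_\chi(F;w) = 0\}$ is simply the zero locus of the polynomial $P_\chi(F, \cdot)$ on weight space, so the variable $w$ on the left-hand side does not pick out a specific representative weight. The content of the corollary is therefore the assertion that the conformal transformation $w \mapsto \tilde{w}$ of $(\R^+)^m$, induced by a conformal factor $u$, sends this zero locus to itself. (I will also remark in passing that the subscript $j$ in the corollary's statement should be read as ranging over characters of $S_{|V|}$, consistent with the notation $P_\chi$ and $Z_\chi$ introduced just above.)

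Second, I will invoke Theorem \ref{det:permanent}, which supplies the pointwise identity
\[
P_\chi(F, \tilde{w}) \;=\; \det(D(u))^2 \, P_\chi(F, w).
\]
Since the diagonal entries of $D(u)$ are exponentials of the form $e^{u(v_i)}$, each strictly positive, one has $\det(D(u)) = \exp\bigl(\sum_i u(v_i)\bigr) > 0$, and hence $\det(D(u))^2$ is a strictly positive real scalar that depends only on $u$, not on $w$. Multiplication of a polynomial by a nonzero scalar preserves its zero set, so $P_\chi(F, \tilde{w}) = 0$ if and only if $P_\chi(F, w) = 0$. This gives $Z_\chi(F; \tilde{w}) = Z_\chi(F; w)$, exactly as required.

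I do not anticipate a real obstacle: the argument is formal once Theorem \ref{det:permanent} is in hand, and indeed the preamble to the corollary already flags ``$D(u)$ is an invertible diagonal matrix'' as the only input needed. The nearest thing to a pitfall is notational — one must resist reading $Z_\chi(F;w)$ as a set that genuinely depends on a chosen base weight $w$, since then the claim would need to be that one set equals another rather than that a single zero locus is stable under an explicit group action on weight space. With that clarification, the corollary reduces, as above, to the positivity of $\det(D(u))^2$.
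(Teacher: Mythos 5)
Your proposal is correct and follows exactly the paper's route: the paper derives the corollary in one line from Theorem \ref{det:permanent}, observing that $D(u)$ is invertible (so $\det(D(u))^2$ is a nonzero, indeed positive, scalar), whence $P_{\chi}(F,\tw)$ and $P_{\chi}(F,w)$ have the same zero locus. Your additional clarifications — that $Z_{\chi}(F;w)$ should be read as the zero locus of the polynomial $P_{\chi}(F,\cdot)$ on weight space, and that the subscript $j$ in the corollary is a notational slip for $\chi$ — are accurate glosses on the paper's (admittedly loose) notation rather than a departure from its argument.
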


\begin{example}
Let $G = C_n$ where $4 | n$, and enumerate $E(G) = \{ e_1,\ldots, e_n \}$. A calculation from \cite{bien} gives that 
\begin{equation}
\textrm{det}(A(E,w)) = \left(\prod_{i \colon \textrm{even}} w(e_i) - \prod_{i \colon \textrm{odd}} w(e_i) \right)^2,
\end{equation}
where $A(E,w)$ is the usual adjacency matrix. It is then clear that the polynomial $P_2(E,w)$ has a nonempty zero locus $Z_2(E;w)$, which is a proper subset of $\mathcal{W}(C_n)$.
\end{example}

\begin{example}
Assume $G$ has an even number $n$ of vertices. Take $F = E$, then the generalized adjacency matrix $A(E,w)$ is skew-symmetric. Then, the \emph{Pfaffian} of a skew-symmetric matrix $A$ is given by
\begin{equation}
\textrm{pf}(A) := \frac{1}{2^n n!} \sum_{\sigma \in S_{n}} \textrm{sgn}(\sigma) \prod_{i=1}^n a_{\sigma(2i-1),\sigma(2i)}.
\end{equation}
It is clear that $\textrm{pf}(A(E,\tilde{w})) = \det(D(u)) \textrm{pf}(A(E,w))$. Consequently, we can again associate to $\textrm{pf}(A(E,w))$ a conformally invariant projective vector as above, and the zero locus is also conformally invariant.
\end{example}

\begin{remark}
In principle, one can consider the polynomials of \cref{eqn:general_polynomial} and the zero set of \cref{zero:equation} that arise from any conformally covariant operator. However, for certain classes of examples (in particular, the variants of the edge Laplacian given in \cref{edge:laplacian_omit}), these invariants are known to be trivial.
\end{remark}

%%%%%%%%

\section{Open problems} 

\subsection{Classification of conformally covariant operators} In the present paper, we proposed 
a definition of conformally covariant operators on graphs, and provided several examples of such operators.  Motivated by \cite{GJMS}, it seems interesting to classify all conformally covariant operators on graphs (in the sense of \cite{ycdv}).  On manifolds, a very important role 
in the study of conformally covariant operators is played by the ambient space construction 
of C. Fefferman; can this construction be extended to graphs?

\subsection{Conformal moduli space} It is well-known that on compact Riemann surfaces, 
in every conformal class there exists a unique metric with constant Gauss curvature (up to 
scaling and the action of the diffeomorphism group).  For surfaces of genus $\geq 2$, 
we get the moduli space of hyperbolic metrics; its quotient by the mapping class group is 
the Teichmuller space, whose geometry and topology has been studied extensively. 
If the graph has nontrivial group $\Gamma$ automorphisms, it seems natural to consider 
the quotient $\cM/\Gamma$ of the conformal moduli space $\cM$; for many graphs $G$,  
$\Gamma(G)$ is trivial.  What is a natural analogue of the Teichmuller space for graphs?  

There exist several natural metrics on moduli spaces of surfaces, including 
the Weil-Petersson metric and the Teichmuller metric. Related problems for graphs 
have been studied in \cite{PS}.  It seems interesting to consider related metrics on $\cM$.   
The boundary of $\cM$ naturally corresponds to weights on $G$ that are $0$ on one 
or more edges; it seems interesting to describe the geometry of that boundary 
with respect to different metrics.  

Finally, some natural operations on graphs that preserve degree sequence (e.g. edge 
switches) can be realized geometrically by letting the weights of several edges decrease 
from $1$ to $0$, then letting the weights of several {\em other} edges increase from 
$0$ to $1$.  It seems that this realization would allow to ``glue'' the corresponding 
spaces of weights $\cW$ for the two graphs along a common boundary; it could 
be interesting to extend this construction to conformal moduli spaces $\cM$. The authors 
hope that this will provide some intuition for related problems on manifolds of metrics.  

\subsection{Graph Jacobians} 

In the papers \cite{BDN,BF,BN} and related articles, the authors developed discrete counterpart 
of the theory of Riemann surfaces, and explored connections to tropical geometry.  Conformal maps 
play an important role in the theory of Riemann surfaces; it seems interesting to explore connections 
between the papers cited above and the present paper. 

\subsection{Discretization, and higher-dimensional complexes}  

In the paper \cite{DP}, the authors proved that spectra of discretized Laplacian on manifolds converge 
to the spectrum of the manifold Laplacian, for suitable choices of discretized operators.  In 
\cite{bobenko, champion, glickstein, Luo} and many other papers, connections between discrete and 
continuous conformal geometry were investigated.  In \cite{wilson} the author showed that 
for a triangulated Riemann surface, and a suitable choice of inner product, the combinatorial period matrix 
converges to the (conformal) Riemann period matrix.  It seems interesting to develop a theory of conformally 
covariant operators on higher-dimensional simplicial complexes, and provide discrete counterparts to the 
results in \cite{BG} and related papers.  

\subsection{Other transformation laws}  The transformation law \eqref{transform:graphs}, 
motivated by \eqref{conf:covariant:manifold}, preserves the signature of an operator, 
leads to a simple transformation law for the kernel, and preserves the nodal set of nullvectors.  However, 
it follows from Sylvester's theorem that signature is preserved under more general 
transformations.  It could be interesting to construct operators satisfying more general 
transformation laws, to study their properties, and to possibly construct continuous analogues.

\section*{Acknowledgements}  The authors want to have Y. Canzani, R. Choksi, J.-C. Nave, S. Norin, A. Oberman, 
R. Ponge,  I. Rivin and G. Tsogtgerel for useful discussions related to the subject of this paper.  The authors want to thank the 
anonymous referee for useful remarks and corrections.

\end{document}